\definecolor{darkgreen}{rgb}{0,0.5,0}
\DeclareSymbolFont{cyrletters}{OT2}{wncyr}{m}{n}
\DeclareMathSymbol{\Sha}{\mathalpha}{cyrletters}{"58}
\newcommand{\Sel}{\operatorname{Sel}}
\newcommand{\tors}{\operatorname{tors}}
\newcommand{\Res}{\text{Res}}
\newcommand{\rad}{\text{rad}}
\newtheorem{theorem}{Theorem} 
\newtheorem{proposition}[theorem]{Proposition}
\newtheorem{corollary}[theorem]{Corollary}
\newtheorem{lemma}[theorem]{Lemma}
\title{Curves with sharp Chabauty-Coleman bound}
\author{Stevan Gajovi\'c  }
\date{}
\begin{document}

\maketitle

\begin{abstract}

We construct curves of each genus $g\geq 2$ for which Coleman's effective Chabauty bound is sharp and Coleman's theorem can be applied to determine rational points if the rank condition is satisfied. We give numerous examples of genus two and rank one curves for which Coleman's bound is sharp. Based on one of those curves, we construct an example of a curve of genus five whose rational points are determined using the descent method together with Coleman's theorem. 
\end{abstract}

\tableofcontents


\section{Introduction} \label{introduction}

One of the fundamental problems in the arithmetic of curves and abelian varieties is to find ways to use $p$-adic information on curves to determine rational points. It is known that $p$-adic points have a richer structure than rational points, and, in principle, are easier to determine. Thus, it is natural to try to use the knowledge of $p$-adic points in studying rational points. There are several such approaches, e.g., the Hasse principle and its generalizations, or bounding ranks of elliptic curves by Selmer groups.\\ 

The method of Chabauty and Coleman, nowadays also called "abelian Chabauty," is a powerful $p$-adic method, and it is often very successful in determining rational points on curves whose Jaconians satisfy a certain rank condition. Coleman's "effective Chabauty" theorem (Theorem \ref{C-Bound}) gives an upper bound on the number of rational points on these curves. However, it is rare that the direct application of Coleman's theorem succeeds in determining all rational points because the bound from Coleman's theorem is not sharp in most cases.\\

In fact, there are only a few curves known for which Coleman's bound is sharp. Hence it is of interest to study the question highlighted by Coleman's result: Regardless of the rank of its Jacobian, when does the
number of rational points on $C$ meet or exceed Coleman's bound at $p$? We will call curves for which the rank condition is satisfied, and for which Coleman's bound is sharp for some prime number $p$ of good reduction, \emph{sharp curves at $p$}. The curves such that the number of their known rational points meets Coleman's bound regardless of the rank condition will be called \emph{potentially sharp curves at $p$}. Note that we cannot use the method of Chabauty and Coleman to determine the set $C(\mathbb{Q})$ for potentially sharp curves $C$ because we did not check the rank condition. However, if the rank condition holds for a potentially sharp curve, then the curve is sharp; hence, we determined its rational points. Curves that exceed the bound will be called \emph{excessive at $p$}. Curves for which there is a prime $p$ such that the curve is sharp at $p$, potentially sharp at $p$, or excessive at $p$ are called sharp, potentially sharp, and excessive, respectively. We note that the rank of the Jacobians of excessive curves is at least $g$.\\

We concisely introduce the method in \cref{C-C-method}, and the computations of the rank of Jacobians of hyperelliptic curves in \cref{Descent-Jacobians}. In \cref{Descent-Rational-Points}, we briefly present the descent method for determining rational points on curves. The descent method is usually used to descend to curves of genus zero and one. Thus, in \cref{Chabauty+descent}, we construct a curve whose rational points are determined by using descent to curves of genus two, where one of them is sharp.\\

After mentioning two known sharp curves in \cref{Known curves}, we give various examples of sharp curves of genus two in \cref{Finite family}, and one with the smallest number of rational points in \cref{Small curve}. In \cref{Lower bounds rank} we list two curves that violate Coleman's bound, and we use this information to determine their ranks.\\

In the third section, we start by presenting sharp curves of genus three, four, and five in \cref{Small large genera}. Using results on the existence of primes in short intervals, we construct infinitely many potentially sharp curves of each genus $g\geq 2$ in \cref{Potential examples}. We give examples of sharp curves of genus $g=4$ and $g=5$ based on this construction. \\

All computations were done in Magma \cite{Magma}. 

\subsection{The method of Chabauty and Coleman and its history} \label{C-C-method}

In 1922, in \cite{Mordell}, Louis J. Mordell proved that the group of rational points of an elliptic curve $E/\mathbb{Q}$ is finitely generated. At the end of the article, Mordell observes that curves of genus $g\geq 2$ have only finitely many rational points. The statement that for a nice (smooth, projective, and geometrically irreducible) curve $C$ of genus $g\geq 2$ and a number field $K$ it holds that $\#C(K)$ is finite was named the "Mordell conjecture". After Mordell's theorem it was a natural question whether the group $A(K)$ is finitely generated for all abelian varieties $A/K$, where $K$ is any number field. In 1928, André Weil proved in \cite{Weil} that for all abelian varieties $A$ over the number field $K$ we have 
$$A(K)\cong \mathbb{Z}^r\oplus A(K)_{\tors},$$
where $A(K)_{\tors}$ is the torsion subgroup of $A(K)$ and $r\in\mathbb{N}_0$ is called the rank of $A(K)$. 

In the 1940s, Claude Chabauty proved the Mordell conjecture for curves that satisfy a certain condition.

\begin{theorem}(Chabauty 1941, \cite{Chabauty}) Let $C$ be a nice curve of genus $g\geq 2$. Let $J(C)$ be its Jacobian, and let $r$ be the rank of $J(C)$ over $\mathbb{Q}$. If $r<g$, then $C(\mathbb{Q})$ is finite.
\end{theorem}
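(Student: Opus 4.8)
The plan is to fix a prime $p$ of good reduction and transfer the problem into the $p$-adic analytic setting, where the rank hypothesis $r<g$ forces $C(\mathbb{Q})$ to lie in the zero locus of a single nonzero $p$-adic analytic function.

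First I would dispose of the trivial case: if $C(\mathbb{Q})=\emptyset$ there is nothing to prove, so assume we have a base point $P_0\in C(\mathbb{Q})$. Using $P_0$, form the Abel--Jacobi embedding $\iota\colon C\hookrightarrow J(C)$, $P\mapsto [P-P_0]$, so that $C(\mathbb{Q})$ maps into $J(C)(\mathbb{Q})$ and $C(\mathbb{Q}_p)$ into $J(C)(\mathbb{Q}_p)$. By Weil's theorem (quoted above) the group $J(C)(\mathbb{Q})$ is finitely generated of rank $r$, while $J(C)(\mathbb{Q}_p)$ is a compact $p$-adic Lie group of dimension $g$.

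Next I would study the topological closure $\overline{J(C)(\mathbb{Q})}$ of $J(C)(\mathbb{Q})$ inside $J(C)(\mathbb{Q}_p)$ for the $p$-adic topology. The key structural claim is that this closure is a $p$-adic Lie subgroup of dimension at most $r$: a finitely generated abelian group of rank $r$ sitting inside a compact $p$-adic Lie group has closure whose $p$-adic logarithm spans a $\mathbb{Q}_p$-subspace of the tangent space of dimension at most $r$. Since $r<g$, the space of translation-invariant differentials on $J(C)$ annihilating the tangent space of this closure is nonzero: there exists $0\neq\omega_J\in H^0(J(C),\Omega^1)$ with $\int_0^{\eta}\omega_J=0$ for every $\eta\in\overline{J(C)(\mathbb{Q})}$, the integral being the $p$-adic abelian integral.

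Finally I would pull this back to the curve. Under the isomorphism $H^0(J(C),\Omega^1)\cong H^0(C,\Omega^1)$ the form $\omega_J$ corresponds to a nonzero regular differential $\omega$ on $C$, and $\lambda(P)=\int_{P_0}^{P}\omega$ is a nonconstant locally analytic function on $C(\mathbb{Q}_p)$ vanishing on all of $C(\mathbb{Q})$. Covering the compact set $C(\mathbb{Q}_p)$ by its finitely many residue disks, on each disk $\lambda$ is a nonzero convergent power series, so by Strassmann's theorem it has only finitely many zeros there; hence $C(\mathbb{Q})\subseteq\lambda^{-1}(0)$ is finite. The main obstacle is the middle step: bounding the $p$-adic dimension of $\overline{J(C)(\mathbb{Q})}$ by the algebraic rank $r$, equivalently producing a nonzero differential annihilating the closure of the Mordell--Weil group. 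This is exactly where $r<g$ enters, and it is the bridge between the arithmetic rank and the $p$-adic analytic geometry; once it is in place, the finiteness of zeros is routine.
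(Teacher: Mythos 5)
Your proposal is correct and follows exactly the route the paper itself sketches: Abel--Jacobi embedding, the $p$-adic closure $\overline{J(C)(\mathbb{Q})}$ of dimension at most $r<g$ inside the $g$-dimensional group $J(C)(\mathbb{Q}_p)$, and finiteness of the intersection with the curve. The paper stops at the "heuristic dimensional reasons" stage, whereas you supply the standard way of making that rigorous (a nonzero invariant differential annihilating the closure, pulled back to a locally analytic function on $C(\mathbb{Q}_p)$ with finitely many zeros per residue disk by Strassmann), so there is no gap and no genuinely different idea.
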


We briefly discuss the idea behind the proof. We may assume that $C(\mathbb{Q})\neq\emptyset$ and embed $C(\mathbb{Q})$ via an Abel-Jacobi map into $J(C)(\mathbb{Q})$, and do the same over the field $\mathbb{Q}_p$, where $p$ is a prime of good reduction for $C$. That means that $\overline{C}$, the curve that is obtained by reduction of $C$ modulo $p$, remains a smooth curve over $\mathbb{F}_p$. We have the following commutative diagram.

\begin{tikzcd}
C(\mathbb{Q}) \arrow[r, hook] \arrow[d, hook] & J(C)(\mathbb{Q}) \arrow[d, hook] \\
C(\mathbb{Q}_p) \arrow[r, hook] & J(C)(\mathbb{Q}_p)
\end{tikzcd}

The set $J(C)(\mathbb{Q}_p)$ has the structure of a $g$-dimensional $p$-adic manifold, containing $C(\mathbb{Q}_p)$ as a 1-dimensional submanifold. The discrete group $J(C)(\mathbb{Q})$ also sits inside $J(C)(\mathbb{Q}_p)$, and we take its closure in the $p$-adic topology of $J(C)(\mathbb{Q}_p)$, denoted by $\overline{J(C)(\mathbb{Q})}$, to make it a submanifold of $J(C)(\mathbb{Q}_p)$. Then the dimension of $\overline{J(C)(\mathbb{Q})}$ is not greater than $r$, so heuristically for dimensional reasons, it should have finite intersection with $C(\mathbb{Q}_p)$, which was what Chabauty proved. This intersection contains the set $C(\mathbb{Q})$ (more precisely, $C(\mathbb{Q})$ injects into it), implying the finiteness of $C(\mathbb{Q})$.

This theorem remained the most significant result towards the Mordell conjecture until Gerd Faltings proved the conjecture unconditionally, i.e. for all nice curves of genus $g\geq 2$, in \cite{Faltings}. It seemed that Chabauty's theorem lost its value. But, that is not true because Robert Coleman in 1985 found a way to use Chabauty's approach to state and prove an effective version of the theorem.

\begin{theorem}(Coleman 1985, \cite{ColemanEC}) \label{C-Bound} Let $C$ be a nice curve of genus $g\geq 2$. Let $J(C)$ be its Jacobian, and let $r$ be the rank of $J(C)(\mathbb{Q})$. Let $p$ be a prime of good reduction for $C$. If $r<g$, and $p>2g$, then
$$\#C(\mathbb{Q})\leq \#\overline{C}(\mathbb{F}_p)+2g-2.$$
\end{theorem}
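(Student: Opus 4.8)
The plan is to combine Chabauty's production of a vanishing differential with Coleman's theory of $p$-adic line integrals, and then to count zeros residue disk by residue disk using Newton polygons. We may assume $C(\mathbb{Q}) \neq \emptyset$ (otherwise the bound is trivial) and fix a base point $P_0 \in C(\mathbb{Q})$, giving an Abel–Jacobi embedding $\iota \colon C \hookrightarrow J(C)$, $Q \mapsto [Q - P_0]$. The cotangent space of $J(C)$ is identified with the $g$-dimensional space $H^0(C_{\mathbb{Q}_p}, \Omega^1)$ of regular differentials, and integration gives a pairing between differentials and $J(C)(\mathbb{Q}_p)$. Since the $p$-adic closure $\overline{J(C)(\mathbb{Q})}$ has dimension at most $r < g$, its annihilator under this pairing has dimension at least $g - r \geq 1$; first I would pick a nonzero $\omega$ in this annihilator. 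By construction the Coleman integral $\lambda(Q) = \int_{P_0}^{Q} \omega$ vanishes on all of $J(C)(\mathbb{Q})$, hence on $\iota(C(\mathbb{Q}))$, so every rational point of $C$ is a zero of $\lambda$.

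Next I would reduce the global count to a sum of local counts. After scaling, $\omega$ may be taken to be defined over $\mathbb{Z}_p$ with reduction $\overline{\omega} \neq 0$ a regular differential on $\overline{C}/\mathbb{F}_p$ (good reduction is what guarantees that $\overline{C}$ is smooth and that $\overline{\omega}$ makes sense). The set $C(\mathbb{Q}_p)$ partitions into residue disks, one for each point of $\overline{C}(\mathbb{F}_p)$, so there are exactly $\#\overline{C}(\mathbb{F}_p)$ of them. On the residue disk above a point $\overline{P}$, choose a uniformizer $t$ and expand $\omega = \left( \sum_{i \geq 0} a_i t^i \right) dt$ with $a_i \in \mathbb{Z}_p$ and $\min_i v_p(a_i) = 0$; then the integral restricts to the power series $\lambda(t) = \sum_{i \geq 0} \frac{a_i}{i+1} t^{i+1}$ up to an additive constant. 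The number of rational points of $C$ in this disk is then bounded by the number of zeros of $\lambda(t)$ with $|t| < 1$.

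The heart of the argument is a local zero count, and this is the step I expect to be the main obstacle. Writing $n_{\overline P} = \operatorname{ord}_{\overline P} \overline{\omega} = \min\{ i : v_p(a_i) = 0 \}$, I would apply the theory of Newton polygons (equivalently, Strassman's theorem) to the antiderivative $\lambda(t)$ to show that it has at most $n_{\overline P} + 1$ zeros in the open unit disk. The delicate point is that passing from $\omega$ to $\lambda$ divides the coefficient $a_i$ by $i+1$, which can lower its $p$-adic valuation; controlling these denominators is exactly where the hypothesis $p > 2g$ enters, since it forces $i + 1 < p$ (hence $v_p(i+1) = 0$) throughout the relevant range $i \leq 2g - 2$, so that the Newton polygon of $\lambda$ in low degree matches that of $\omega$ and the count $n_{\overline P} + 1$ remains valid. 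Finally I would sum over all residue disks:
\[
\#C(\mathbb{Q}) \;\leq\; \sum_{\overline P \in \overline{C}(\mathbb{F}_p)} \bigl( n_{\overline P} + 1 \bigr) \;=\; \#\overline{C}(\mathbb{F}_p) + \sum_{\overline P \in \overline{C}(\mathbb{F}_p)} n_{\overline P}.
\]
Since $\overline{\omega}$ is a nonzero regular differential on a curve of genus $g$, its divisor has degree $2g - 2$, and the zeros occurring at $\mathbb{F}_p$-rational points contribute at most this total, giving $\sum_{\overline P} n_{\overline P} \leq 2g - 2$ and hence the desired bound $\#C(\mathbb{Q}) \leq \#\overline{C}(\mathbb{F}_p) + 2g - 2$.
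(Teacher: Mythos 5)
Your argument is correct and is essentially the standard proof that the paper itself only sketches (it cites Coleman's original article and outlines exactly this strategy: an annihilating differential, its Coleman antiderivative vanishing on $C(\mathbb{Q})$, a disk-by-disk zero count, and summation). The one point worth spelling out more carefully is the tail of the power series: for $i+1\geq p$ the division by $i+1$ can raise a coefficient's valuation, and ruling out extra Newton-polygon vertices there is precisely where one needs $n_{\overline P}\leq 2g-2<p-2$, which your hypothesis $p>2g$ supplies.
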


Coleman invented a theory of $p$-adic integration on curves in \cite{ColemanI}, which has properties that we would expect from integration, such as linearity in integrands and additivity in endpoints. When two endpoints are in the same residue disc, i.e., when both have the same reduction modulo $p$, we can compute the integral in the expected way by expressing the integrand as a power series in a local parameter and then integrating term by term. There are more important properties, such as Frobenius equivariance, and unexpectedly, path-independence, unlike the real and complex case. The last two properties turn out to be very convenient, and all properties together make Coleman integrals practically computable, which makes Chabauty's method effective. Using Coleman integration, we can construct a locally analytic function $\rho:C(\mathbb{Q}_p)\longrightarrow\mathbb{Q}_p,$ which vanishes on $C(\mathbb{Q})$. We can estimate the number of zeros in each residue disc, and summing all estimates gives us the bound in Theorem \ref{C-Bound}. 

We can furthermore investigate the function $\rho$ in each residue disc (and sometimes combine results with some other methods) to provably determine $C(\mathbb{Q})$. This locally analytic function $\rho$ is the main ingredient in determining rational points on curves, and it is a main object of research nowadays to extend the method to the complementary case $r\geq g$. This is an explicit version of a non-abelian extension of this method, a program initiated by Kim in \cite{Kim2005MFG} and \cite{Kim2009Selmer} when $r\geq g$. Quadratic Chabauty is possible when $r=g$, because then, instead of linear functionals (integrals), we use quadratic functions ($p$-adic heights) to determine rational points. The first explicit examples can be found in \cite{BBM1}.  

By a careful investigation inside each residue disc, Michael Stoll in \cite{StollImprovedBound} proved a theorem that improves Coleman's bound.

\begin{theorem}(Stoll 2006, \cite{StollImprovedBound}) \label{S-rank} Let $C$ be a nice curve of genus $g\geq 2$. Let $J(C)$ be its Jacobian, and let $r$ be the rank of $J(C)$ over $\mathbb{Q}$. Let $p$ be a prime of good reduction for $C$. If $r<g-1$, and $p>2r+2$, then
$$\#C(\mathbb{Q})\leq \#\overline{C}(\mathbb{F}_p)+2r.$$
\end{theorem}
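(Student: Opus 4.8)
The plan is to run the Chabauty--Coleman argument of \S\ref{C-C-method}, but, following Stoll, to let the annihilating differential vary from one residue disc to another and then to control the accumulated contribution by Clifford's theorem rather than by the divisor of a single differential. First I would fix a smooth model of $C$ over $\mathbb{Z}_p$ (available since $p$ is of good reduction) and consider the $\mathbb{Q}_p$-vector space $V$ of regular differentials $\omega$ on $C_{\mathbb{Q}_p}$ whose associated Coleman integral annihilates the $p$-adic closure $\overline{J(C)(\mathbb{Q})}$. Since $\dim\overline{J(C)(\mathbb{Q})}\leq r$, the rank hypothesis $r<g-1$ forces $\dim_{\mathbb{Q}_p}V\geq g-r\geq 2$. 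Intersecting $V$ with the lattice of integral differentials and reducing modulo $p$ produces an $\mathbb{F}_p$-subspace $\overline V\subseteq H^0(\overline C,\Omega^1)$ with $\dim_{\mathbb{F}_p}\overline V\geq g-r$; the injectivity of this reduction on a primitive lattice is a routine but essential point.

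Next, for each $\bar P\in\overline C(\mathbb{F}_p)$ I would choose, among all nonzero $\bar\omega\in\overline V$, one whose order of vanishing $n_{\bar P}=\mathrm{ord}_{\bar P}(\bar\omega)$ at $\bar P$ is minimal, and lift it to some $\omega\in V$. Because $\int\omega$ vanishes on $C(\mathbb{Q})$, every rational point reducing to $\bar P$ is a zero of $\int\omega$ in the residue disc of $\bar P$. Writing $\omega=w(t)\,dt$ in a uniformizer $t$ and integrating term by term, the integral is a $p$-adic power series whose first nonconstant term has degree $n_{\bar P}+1$; a Newton-polygon estimate for zeros of such series (Coleman's lemma) bounds the number of zeros in the disc by $1+n_{\bar P}$, provided $p>n_{\bar P}+1$, since then the denominators $1/(i+1)$ arising from integration stay $p$-adically invisible over the relevant range of indices. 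The hypothesis $p>2r+2$ secures this once we know $n_{\bar P}\leq 2r$, to be established below. Summing over residue discs gives
$$\#C(\mathbb{Q})\leq\sum_{\bar P\in\overline C(\mathbb{F}_p)}\bigl(1+n_{\bar P}\bigr)=\#\overline C(\mathbb{F}_p)+\sum_{\bar P\in\overline C(\mathbb{F}_p)}n_{\bar P}.$$

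The heart of the matter, and the step I expect to be the main obstacle, is to prove $\sum_{\bar P}n_{\bar P}\leq 2r$. Here I would view $\overline V$ as a linear subsystem of the canonical system $|K|$ on $\overline C$ and let $B$ be its base (fixed) divisor, so that $\mathrm{ord}_{\bar P}(B)=n_{\bar P}$ for each $\bar P$ and hence $\sum_{\bar P}n_{\bar P}\leq\deg B$. Factoring out $B$ embeds $\overline V$ into $H^0(\overline C,K-B)$, so $h^0(K-B)\geq\dim_{\mathbb{F}_p}\overline V\geq g-r\geq 2$; in particular $K-B$ is effective. Since $B$ is effective as well, the residual $K-(K-B)=B$ has a section, so $K-B$ is special and Clifford's theorem applies:
$$g-r\leq h^0(K-B)\leq\tfrac12\deg(K-B)+1=g-\tfrac12\deg B.$$
Rearranging yields $\deg B\leq 2r$, which completes the count and retroactively justifies $n_{\bar P}\leq\deg B\leq 2r$. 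It is exactly the assumption $r<g-1$, forcing $\dim_{\mathbb{F}_p}\overline V\geq 2$ so that $\overline V$ is at least a pencil, that makes this Clifford estimate a genuine improvement over the single-differential bound $2g-2$ of Theorem \ref{C-Bound}. The delicate points to verify are the integral normalization and reduction of $V$, that $K-B$ is simultaneously effective and special so that Clifford's inequality is legitimately invoked, and the bookkeeping in the Newton-polygon zero count under the stated constraint on $p$.
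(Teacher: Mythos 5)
The paper gives no proof of this theorem; it is stated as a citation of \cite{StollImprovedBound}, so there is no internal argument to compare against. Your proposal is a faithful and essentially correct reconstruction of Stoll's original proof: the mod-$p$ reduction $\overline V$ of the annihilating space of dimension at least $g-r\geq 2$, the disc-by-disc minimal vanishing orders $n_{\bar P}$, the Newton-polygon count $1+n_{\bar P}$ per residue disc, and the Clifford-theorem bound $\deg B\leq 2r$ on the base divisor of $\overline V$ (which you apply to $K-B$ rather than to $B$, an equivalent use of Clifford's symmetry) are exactly the ingredients of his argument. The one place where your bookkeeping is looser than the original is the zero-counting threshold: Stoll's lemma on zeros of the integrated power series carries a correction term $\delta(n_{\bar P},p)$ coming from the denominators $1/(i+1)$ for indices $i>n_{\bar P}$, and one must check that this term vanishes; it does under the stated hypothesis $p>2r+2$ once $n_{\bar P}\leq\deg B\leq 2r$ is known, so your conclusion is unaffected.
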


As a corollary, we have the following rank statement for sharp curves. We know that $r<g$. If $r<g-1$, then Coleman's bound cannot be sharp.

\begin{corollary} \label{Rank-corollary}
Let $C$ be a sharp curve of genus $g\geq 2$. Then the rank of the Jacobian of $C$ over $\mathbb{Q}$ is $r=g-1$.
\end{corollary}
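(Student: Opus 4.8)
The plan is to prove this by contradiction, playing Stoll's improved bound (Theorem \ref{S-rank}) against the defining sharpness equality. By the very definition of a sharp curve, the rank condition $r<g$ is part of the hypothesis, so it suffices to rule out the possibility $r<g-1$; the two inequalities $r<g$ and $r\geq g-1$ together then force $r=g-1$.

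First I would unwind the data packaged into the hypothesis. Since $C$ is sharp, there is a prime $p$ of good reduction at which Coleman's bound is attained, and this prime is necessarily admissible for Coleman's theorem, so $p>2g$ (this is exactly the regime in which Theorem \ref{C-Bound} asserts the bound). Thus
$$\#C(\mathbb{Q})=\#\overline{C}(\mathbb{F}_p)+2g-2.$$
Now suppose, for contradiction, that $r<g-1$. I would then verify that this same prime $p$ also satisfies the hypothesis of Stoll's theorem: from $r<g$ we get $r\leq g-1$, hence $2r+2\leq 2g<p$, so $p>2r+2$. Combined with the standing assumption $r<g-1$, both hypotheses of Theorem \ref{S-rank} hold at the identical prime $p$ that witnesses sharpness.

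Applying Stoll's bound at $p$ therefore gives $\#C(\mathbb{Q})\leq \#\overline{C}(\mathbb{F}_p)+2r$. Comparing this with the sharpness equality above and cancelling the common quantity $\#\overline{C}(\mathbb{F}_p)$ yields $2g-2\leq 2r$, that is $r\geq g-1$, contradicting the assumption $r<g-1$. Hence $r\geq g-1$, and together with the rank condition $r<g$ this forces $r=g-1$, as claimed.

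The argument is short precisely because the substantive inequality is already delivered by Theorem \ref{S-rank}; the only genuine thing to check is that the prime certifying sharpness is simultaneously admissible for Stoll's theorem. I expect this compatibility of the prime conditions to be the sole (and minor) obstacle, and it reduces to the numerical implication $p>2g\Rightarrow p>2r+2$ carried out above.
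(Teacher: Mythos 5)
Your argument is correct and is essentially the paper's own proof: the paper derives the corollary in one line by noting that $r<g$ by definition of sharpness and that Stoll's Theorem \ref{S-rank} would force $\#C(\mathbb{Q})\leq \#\overline{C}(\mathbb{F}_p)+2r<\#\overline{C}(\mathbb{F}_p)+2g-2$ if $r<g-1$. Your additional check that the sharpness prime $p>2g$ automatically satisfies Stoll's condition $p>2r+2$ is a detail the paper leaves implicit, but the route is the same.
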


If a curve satisfying the rank condition is not sharp, this means that looking at the local information at $p$ is not sufficient to determine its rational points. Nevertheless, it is often possible to compute the rational points by combining the Chabauty information at $p$ with $v$-adic information for other primes $v$, for instance, obtained through the Mordell-Weil sieve. The Mordell-Weil sieve was introduced in \cite{Scharaschkin}, and well explored in the paper \cite{Bruin_Stoll_2010}. We also note that the Mordell-Weil sieve can be successfully combined with the Quadratic Chabauty method, see, e.g., \cite{BBM2}. 

One reference for more details about the method of Chabauty and Coleman is \cite{WMC-BP}. The notes \cite{SiksekOhrid} contain some very explicit examples, including the combination of the method with the Mordell-Weil sieve. 

\subsection{On the computation of the rank of the Jacobian of hyperelliptic curves} \label{Descent-Jacobians}

The problem of computing the rank of abelian varieties is extremely difficult in general and still open. It is related to one of the most important conjectures in arithmetic geometry. This is the Birch and Swinnerton-Dyer conjecture, which states that the rank of an abelian variety can be computed analytically, namely, as the order of vanishing of its $L$-function at $s=1$ (for more details see, e.g., \cite{TateBSD}). The latter quantity is called the analytic rank. Thus, if one is willing to assume the Birch and Swinnerton-Dyer conjecture, then the rank of the Jacobian of a hyperelliptic curve can be computed as an analytic rank. We can compute the $L$-function of hyperelliptic curves in Magma using the algorithm by Dokchitser \cite{LFunctionsMagma}, and we can evaluate it at $s=1$ as well as its derivatives. We need to be careful that we can compute these values only up to some precision. In general, we cannot prove that some value is zero, although we can verify that it is very close to zero.

Algebraically, there has been a lot of progress since Weil's theorem, and there are methods to compute the rank that work in many cases, but none of them is guaranteed to work, even for the simplest case of elliptic curves.
One of the first explicit computations for dimension greater than one appeared in the paper \cite{Gordon-Grant}, followed by \cite{Schaefer}. It was later generalized and implemented for hyperelliptic curves in Magma by Stoll, \cite{Stoll-Implementing-2-descent}.

Let $J$ be the Jacobian of a hyperelliptic curve $C:y^2=f(x)$. From the Kummer exact sequence
$0\longrightarrow J[2]\longrightarrow J  \overset{[2]}{\longrightarrow}  J\longrightarrow 0,$ and the long exact sequence of Galois cohomology, we get the following commutative diagram ($M_{\mathbb{Q}}$ is the set of places of $\mathbb{Q}$):

\begin{tikzpicture}
    \node[left] at (0,0) (1) {0};
    \draw [->] (1) -- (1,0);
    \node[right] at (1,0) (2) {$J(\mathbb{Q})/2J(\mathbb{Q})$};
    \draw [->]  (2) -- (5.5,0);
    \node[right] at (5.5,0) (3)  {$H^1(\mathbb{Q}, J[2])$};
    \draw [->]  (3) -- (9.5,0);
    \node[right] at (9.5,0) (4)  {$H^1(\mathbb{Q}, J)[2]$};
    \draw [->]  (4) -- (13,0);
    \node[right] at (13,0) (5)  {0};
    \node[left] at (0,-1.75) (6) {0};
    \draw [->] (6) -- (0.5,-1.75);
    \node[right] at (0.5,-2) (7) {$\displaystyle\prod_{v\in M_{\mathbb{Q}}} J(\mathbb{Q}_v)/2J(\mathbb{Q}_v)$};
    \draw [->]  (4.25,-1.75) -- (5.1,-1.75);
    \node[right] at (5,-2) (8)  {$\displaystyle\prod_{v\in M_{\mathbb{Q}}} H^1(\mathbb{Q}_v, J[2])$};
    \draw [->]  (8.4,-1.75) -- (9.15,-1.75);
    \node[right] at (9,-2) (9)  {$\displaystyle\prod_{v\in M_{\mathbb{Q}}} H^1(\mathbb{Q}_v, J)[2]$};
    \draw [->]  (12.35,-1.75) -- (13,-1.75);
    \node[right] at (13,-1.75) (10)  {0};
    \draw [->] (2) --(2.25,-1.25);
    \draw [->] (3) --(6.7,-1.25);
    \draw [->] (4) --(10.7,-1.25);
    \draw [->] (3) -- (9.5,-1.25) node[midway,sloped,above] {$\alpha$};
\end{tikzpicture}


The kernel of the map $\alpha$ is \emph{the Selmer group} $\Sel^{(2)}(J/\mathbb{Q})$, and the kernel 
$$\Sha(\mathbb{Q},J):=\ker\left( H^1(\mathbb{Q}, J)\longrightarrow \prod_{v\in M_{\mathbb{Q}}} H^1(\mathbb{Q}_v, J)\right)$$ is called \emph{the Shafarevich-Tate group}.

We have the following exact sequence from the diagram above
$$0\longrightarrow J(\mathbb{Q})/2J(\mathbb{Q}) \longrightarrow \Sel^{(2)}(J/\mathbb{Q}) \longrightarrow \Sha(\mathbb{Q},J)[2]\longrightarrow 0.$$

It is a well-known fact that $\Sel^{(2)}(J/\mathbb{Q})$ is a finite group. The idea of the proof, which works for more general Selmer groups and can be found in, e.g., \cite[C.4]{Hindry-Silverman}, is to construct an injection into a finite commutative group indexed by bad and infinite primes. In \cite[C.4]{Hindry-Silverman}, we also see that the Selmer group is, in principle, effectively computable. We have 
$$\dim_{\mathbb{F}_2}J(\mathbb{Q})/2J(\mathbb{Q})=r+\dim_{\mathbb{F}_2}J(\mathbb{Q})[2]\leq \dim_{\mathbb{F}_2}\Sel^{(2)}(J/\mathbb{Q}).$$
The difference between the two sides is exactly $\dim_{\mathbb{F}_2}\Sha(\mathbb{Q},J)[2]$.

We can easily determine $J(\mathbb{Q})[2]$; it depends on the factorization of $f$ over $\mathbb{Q}$. Thus, we can obtain an upper bound on the rank of $J(\mathbb{Q})$.

For the lower bound, we search for rational points on $J$, up to some bounded height, and hope that eventually, we will find the same number of independent points as the upper bound for the rank. If the curve has many rational points, then we might be able to find a lower bound on the rank, see the remark in \cref{Lower bounds rank}. The idea to use Coleman's bound to obtain the lower bound for the rank with the infinite families of curves as the examples and the improvements on the lower bound can be found in \cite{OntheCCbound}.

From the discussion above, we easily see why the algorithm is not guaranteed to work, but it still works very well in practice. We also see that the (partial) knowledge of $\Sha(\mathbb{Q},J)$ can improve upper bounds, e.g., as in \cite[Chapter 8]{Stoll-Implementing-2-descent}. 

\subsection{Introduction to the descent method} \label{Descent-Rational-Points}

The descent method is one of the first methods ever used for determining rational points on curves. It is still very applicable, especially for hyperelliptic curves. The basics of this method are explained in more detail in \cite{StollOhrid}. For explicit descent on hyperelliptic curves, see \cite{Bruin-Stoll}. The descent method is based on the property of unique factorization in $\mathbb{Z}$. In a sense, it is a generalization of the fact that if a product of two numbers is a square, then these two numbers have the same squarefree part. We can bound the possible greatest common divisor of these numbers, which gives only finitely many possibilities for their squarefree parts. In principle, we have a strategy to reduce all possible factorizations to a finite number.

Let us make it more precise. Suppose that $C/\mathbb{Q}$ is a hyperelliptic curve given by the equation
$$C:y^2=f_1(x)f_2(x),$$
where $f_1$ and $f_2$ are non-constant coprime integral polynomials, with at least one of $\deg(f_1)$ and $\deg(f_2)$ even. Then, for every point $(x_0,y_0)\in C(\mathbb{Q})$, we have $f_1(x_0)\neq 0$ or $f_2(x_0)\neq 0$, and there is a unique squarefree $d\in\mathbb{Z}$, and $z_0,t_0\in\mathbb{Q}$, such that 
$$f_1(x_0)=dz_0^2, \quad f_2(x_0)=dt_0^2.$$
Denote by $C_d$ the curve given by equations 
$$C_d:f_1(x)=dz^2, \quad f_2(x)=dt^2;$$
then there is a covering $\pi_d:C_d\longrightarrow C$ given by 
$$(x,z,t)\mapsto(x,dzt).$$
It follows that
$$C(\mathbb{Q})=\bigcup_{d\in S}\pi_d(C_d),$$
where $S\subset \mathbb{Z}$ denotes the set of all squarefree numbers. This set is infinite, but we can reduce it to a finite set. Let us assume that $f_1$ and $f_2$ are monic polynomials (otherwise, we have to include primes that divide their leading coefficients). As these polynomials are coprime, their resultant is a non-zero integer $R$. The resultant gives information on possible common roots. So, for all primes $p$ such that $p\nmid R$, the resultant of $f_1$ and $f_2$ over $\mathbb{F}_p$ is not zero, implying that $f_1$ and $f_2$ do not have a common root in $\mathbb{F}_p$. It means that $f_1$ and $f_2$ are "coprime at $p$", i.e., that it is not possible that $p$ divides the squarefree part $d$. So, the number of possible primes that can divide the squarefree part $d$ is finite, and we need to investigate a finite number of curves $C_d$ (which, in principle, should be easier than doing so for $C$) to find all rational points on $C$.

\subsection{Acknowledgements}

The author is funded by the DFG-Grant MU 4110/1-1. Part of this research was done during
the "Summer school in computational number theory" in Bristol and during a visit to Boston
University where the author was partially supported by the Diamant PhD Travel Grant. The
author thanks Steffen Müller for his support, checking examples and numerous comments and
corrections on the text, Jennifer Balakrishnan, Francesca Bianchi, C\'eline Maistret, Michael Stoll, and Jaap Top for useful discussions, Pieter Moree for significant help on obtaining the special case of Bertrand's postulate used here, Lazar Radi\v{c}evi\'{c} for many suggestions and checking examples, Alex Best for helpful suggestions, Oana Adascalitei for help and support when creating examples, and Boston University for their hospitality and providing Magma on their computer. The author thanks Noam Elkies and Kirti Joshi for bringing attention to examples related to the work and ideas mentioned in this paper.
 The author thanks the anonymous referees for carefully reading the article and many useful comments.
\section{Examples of sharp curves of genus two}

\subsection{Two known sharp curves in genus two} \label{Known curves}

We first mention two known examples.\\

\noindent {\bf Example 1.}(\cite{Grant}) This is the first known example of a sharp curve, so we see that it took almost ten years after Coleman's paper \cite{ColemanEC} until the first example appeared. Let $C$ be the curve given by the equation
$$C: y^2=x(x-1)(x-2)(x-5)(x-6).$$
This curve has good reduction at $p=7$, and it has eight $\mathbb{F}_7$-rational points
$$\overline{C}(\mathbb{F}_7)=\{(0,0),(1,0),(2,0),(3,\pm1),(5,0),(6,0),\infty\}.$$
In \cite{Gordon-Grant}, it is computed that $J(C)(\mathbb{Q})\cong \mathbb{Z}\times\left(\mathbb{Z}/2\mathbb{Z}\right)^4$. We can apply Theorem \ref{C-Bound} to obtain that $C(\mathbb{Q})\leq 8+2=10$. We can find ten rational points, so
$$C(\mathbb{Q})=\{(0,0),(1,0),(2,0),(3,\pm6),(5,0),(6,0),(10,\pm120),\infty\}.$$

\noindent {\bf Example 2.}(\cite{Triangles}) This example is miraculous because it arises
from a geometric problem. Namely, this curve occurs in analysing the problem whether there exists a pair of one right triangle and one isosceles triangle, both with rational sides, having the same area and perimeter. More details about the problem can be found in \cite{Triangles}. The curve $C$ is given by the equation
$$C:y^2=(x^3-x+6)^2-32.$$
We compute that the rank of its Jacobian over $\mathbb{Q}$ is equal to $r=1$, that $C$ has good reduction at $p=5$, and that its reduction modulo 5 has exactly eight points. All conditions for Theorem \ref{C-Bound} are satisfied; thus we conclude
$$C(\mathbb{Q})=\left\{(0,\pm2),(1,\pm2),(-1,\pm2),\left(\dfrac{5}{6},\pm\dfrac{217}{216}\right),\infty_{\pm}\right\}.$$

\subsection{A finite family of sharp curves of genus two} \label{Finite family}

We construct new examples of sharp curves of genus two. Let $C$ be a curve. We focus on genus two curves because the computation of the rank becomes more difficult for larger genus. Thus, the bound is given by 
$$\#C(\mathbb{Q})\leq \#\overline{C}(\mathbb{F}_p)+2.$$
Since we want this bound to be sharp, we would like that two residue discs have extra points (this condition is natural due to the hyperelliptic involution), and other ones have exactly one rational point. The easiest way to achieve this is to require a small number of residue discs. We will construct a suitable monic polynomial $f(x)$ of degree 5. One natural choice for a prime $p$ to consider is $p=11$, because $x^5\equiv 0,\pm1 \pmod{11}$. If we want to include other monomials in $x$, we want them to have a coefficient divisible by 11, to control the polynomial $f(x)$ modulo 11 easily. It remains to pick the constant term. We want some constant for which we can find rational points, but not too many modulo 11. Quadratic residues modulo 11 are in the set $\{0,1,3,4,5,9\}$, so if we pick the constant term to be 9 modulo 11, we have that 
$$f(x)\equiv x^5+9\equiv \{8,9,10\}\pmod{11}.$$
This implies that 
$$\overline{C}(\mathbb{F}_{11})=\{(0,\pm3),\infty\}.$$
When the rank of the Jacobian of $C$ over $\mathbb{Q}$ is less than two, by Coleman's bound, $C$ can have at most 5 rational points. \\

Note, as in \cite{Grant}, that we should check if a curve $C$ has absolutely irreducible Jacobian. If this were not the case, then $J(C)$ would be isomorphic to the product of two elliptic curves over some number field $K$, and one of them would have rank zero, providing an easier way to determine rational points on $C$. In \cite{Grant}, David Grant together with Jaap Top, proved absolute simplicity of the Jacobian of the curve from Example 1 by proving that its $L$-function cannot be a product of two $L$-functions of elliptic curves. In the meantime, in \cite{Howe-Zhu}, Everett W. Howe and Hui J. Zhu found a criterion when an abelian variety over a finite field is absolutely simple.

\begin{lemma} (\cite[Lemma 8]{Howe-Zhu})\label{HZ-irred}
Let $q$ be a prime power and $n>2$ an integer. Suppose that $\pi$ is an ordinary Weil $q$-number (the minimal polynomial for $\pi$ is a characteristic polynomial of Frobenius of an ordinary abelian variety). Let $K=\mathbb{Q}(\pi)$, $K^+$ its maximal real subfield, and $n=[K^+:\mathbb{Q}]$. Suppose that \\
(1) the minimal polynomial of $\pi$ is not of the form $x^{2n}+ax^n+q^n$,\\
(2) the field $K^+$ has no proper subfields other than $\mathbb{Q}$,\\
(3) the field $K^+$ is not the maximal real subfield of a cyclotomic field.\\
Then the isogeny class corresponding to $\pi$ consists of absolutely simple abelian varieties.\\
\end{lemma}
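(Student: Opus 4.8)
The plan is to translate absolute simplicity into a purely field-theoretic statement about the powers $\pi^m$, and then to show that a collapse $\mathbb{Q}(\pi^m)\subsetneq\mathbb{Q}(\pi)$ is incompatible with (1)--(3). First I would invoke the Honda--Tate dictionary: the isogeny class attached to $\pi$ over $\mathbb{F}_{q^m}$ is the one attached to the Weil $q^m$-number $\pi^m$, and, because the variety is \emph{ordinary}, its endomorphism algebra is the commutative field $K=\mathbb{Q}(\pi)$; hence simplicity over $\mathbb{F}_{q^m}$ is equivalent to irreducibility of the characteristic polynomial of $\pi^m$, i.e. to $[\mathbb{Q}(\pi^m):\mathbb{Q}]=2n$. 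Since absolute simplicity means simplicity over every $\mathbb{F}_{q^m}$, the lemma reduces to the claim that $\mathbb{Q}(\pi^m)=K$ for all $m\ge 1$. Equivalently (comparing the $m$-th powers of the conjugates of $\pi$, all of which have absolute value $\sqrt q$ at every archimedean place), a collapse occurs exactly when some nontrivial root of unity $\zeta$ makes $\zeta\pi$ again a $\mathbb{Q}$-conjugate of $\pi$.

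Arguing by contradiction, I would pick the \emph{smallest} $m>1$ with $K_m:=\mathbb{Q}(\pi^m)\subsetneq K$ and determine what kind of subfield $K_m$ can be. It cannot be totally real: if $\pi^m$ were totally real, all its conjugates would be real of absolute value $q^{m/2}$, forcing $\pi^{2m}=q^m$ and hence $p$-adic valuation $1/2$ for $\pi$, contradicting the ordinary Newton-polygon condition (slopes $0$ and $1$). So $K_m$ is a CM field, and its maximal real subfield is contained in $K^+$; by hypothesis (2) this real subfield is $\mathbb{Q}$ or $K^+$, and the latter gives $K_m\supseteq K^+$ with $[K_m:K_m\cap K^+]=2=[K:K^+]$, i.e. $K_m=K$. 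Therefore $K_m$ is \emph{imaginary quadratic}. Writing the quadratic relation $\pi^{2m}-b\pi^m+q^m=0$ (the norm of $\pi^m$ down to $\mathbb{Q}$ being $q^m$), I see that $\pi$ is a root of $x^{2m}-bx^{m}+q^{m}$, so its minimal polynomial, of degree $2n$, divides this trinomial; in particular $n\le m$.

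The crux is then to force $n=m$ and to control the root of unity, and this is where (1) and (3) enter. The conjugates of $\pi$ over $K_m$ are exactly the roots of its minimal polynomial over $K_m$, all of the shape $\omega^{j}\pi$ with $\omega$ a primitive $m$-th root of unity; minimality of $m$ should prevent $x^{m}-\pi^m$ from factoring over $K_m$ (a proper factorization would, via a Capelli-type irreducibility criterion, exhibit $\pi^{m/\ell}$ inside a smaller subfield and so contradict minimality), so this minimal polynomial is $x^{m}-\pi^m$ itself and $n=m$. Consequently the minimal polynomial of $\pi$ over $\mathbb{Q}$ is exactly $x^{2n}-bx^{n}+q^{n}$, of the form forbidden by (1)---a contradiction. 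Along the way one must also rule out that the roots of unity linking the conjugates of $\pi$ generate a large cyclotomic field: using that complex conjugation is central in the Galois closure of the CM field $K$, the real subfield $\mathbb{Q}(\zeta+\zeta^{-1})$ of the relevant cyclotomic field embeds into $K^+$, so by (2) it is $\mathbb{Q}$ or $K^+$; the case $K^+$ would exhibit $K^+$ as the maximal real subfield of a cyclotomic field, contradicting (3), while the case $\mathbb{Q}$ pins $\zeta$ to order in $\{1,2,3,4,6\}$ and feeds back into the trinomial analysis.

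I expect the main obstacle to be precisely this last coordination: showing that \emph{every} way the conjugates of $\pi$ can differ by a root of unity is caught by one of the three hypotheses. Concretely, the delicate points are (i) the Capelli-type irreducibility argument that upgrades $n\le m$ to $n=m$ while respecting the minimality of $m$, and (ii) the bookkeeping, when $\zeta\notin K$, that nonetheless routes the cyclotomic field generated by $\zeta$ through $K^+$ so that (3) can be applied. The ordinary hypothesis is used only once, but essentially---to eliminate the totally real subfield at the outset---whereas (2) collapses the subfield lattice of $K$ to the two manageable cases, and (1) and (3) dispose of the small-order and large-order roots of unity respectively.
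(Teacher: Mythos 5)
Note first that the paper does not prove this statement: it is quoted verbatim as \cite[Lemma 8]{Howe-Zhu} and used as a black box in Proposition \ref{example3}(2), so there is no in-paper proof to compare against; your proposal has to be judged against the argument in Howe--Zhu itself. Your skeleton does match theirs: the Honda--Tate/Tate reduction of absolute simplicity to ``$\mathbb{Q}(\pi^m)=\mathbb{Q}(\pi)$ for all $m$'' (valid because ordinariness makes the endomorphism algebra the commutative CM field $K$), the use of the Newton polygon to rule out $\mathbb{Q}(\pi^m)$ totally real, hypothesis (2) to force a proper $K_m=\mathbb{Q}(\pi^m)$ to be imaginary quadratic, and hypotheses (1) and (3) to kill the resulting trinomial and cyclotomic configurations. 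That architecture is right, and the endgame (if $x^m-\pi^m$ is irreducible over $K_m$ then $n=m$ and the minimal polynomial of $\pi$ over $\mathbb{Q}$ is $x^{2n}-\operatorname{Tr}_{K_m/\mathbb{Q}}(\pi^n)x^n+q^n$, contradicting (1)) is correct as far as it goes.

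The genuine gap is the step you yourself flag: ``minimality of $m$ should prevent $x^m-\pi^m$ from factoring over $K_m$'' is asserted, not proved, and it is not a formal consequence of minimality. By the Capelli--Vahlen criterion, reducibility means $\pi^m\in K_m^{\ell}$ for some prime $\ell\mid m$, or $4\mid m$ and $\pi^m\in-4K_m^4$. Writing $\pi^m=\beta^{\ell}$ gives $\beta=\zeta\pi^{m/\ell}$ for an $\ell$-th root of unity $\zeta$; when $\zeta=\pm1$ this does contradict minimality of $m$, but when $\zeta$ has order $3$ or $4$ one only gets $\zeta\pi^{m/\ell}\in K_m$ with $\zeta\in K$ and $\mathbb{Q}(\zeta)^+=\mathbb{Q}$, which is perfectly consistent with (1)--(3). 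These subcases can in fact be closed --- e.g.\ $\zeta$ of order $3$ gives $K=\mathbb{Q}(\pi^{m/3})\subseteq K_m(\zeta_3)$, a field of degree at most $4$, and similarly for the $-4K_m^4$ branch via $(1+i)^4=-4$ --- but only by invoking the hypothesis $n>2$, which your proposal never uses anywhere. That unused hypothesis is the tell: it is stated in the lemma precisely because these exceptional low-degree collapses exist, and an argument that never touches $n>2$ cannot be complete. Your closing paragraph about roots of unity $\zeta\notin K$ and routing $\mathbb{Q}(\zeta+\zeta^{-1})$ through $K^+$ gestures at the right bookkeeping but does not carry it out, so the implication ``reducible $\Rightarrow$ contradiction,'' and hence the conclusion $n=m$ feeding hypothesis (1), remains unestablished as written.
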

We finally give a number of examples.\\

\begin{proposition} \label{example3} Let $C_k:y^2=f(x)$ be a hyperelliptic curve over $\mathbb{Q}$, where 
$$f(x)=x^5+11x^4+(11k+3)^2,\hspace*{2mm} k\in\{0,1,2,3,7,10,11,12,15,21,22,31,40,42,44,47,50\}$$ 
or
$$f(x)=x^5+11x^4+(11k-3)^2,\hspace*{2mm} k\in\{1,4,9,15,16,17,19,27,28,31,40,41,42,43\}.$$
(1) Then $\#C_k(\mathbb{Q})=5$, and in the first case
$$ C_k(\mathbb{Q})= \{(0,\pm(11k+3)),(-11,\pm(11k+3)),\infty\},$$
whereas in the second case
$$ C_k(\mathbb{Q})= \{(0,\pm(11k-3)),(-11,\pm(11k-3)),\infty\}.$$
(2) All curves $C_k$ from (1) have absolutely simple Jacobian.
\end{proposition}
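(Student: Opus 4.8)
The plan is to treat the two parts separately, using Coleman's bound together with Stoll's refinement for (1), and reduction together with the structure of the associated quartic CM field for (2).

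For part (1), the first step is to read off the reduction at $p=11$ from the design of $f$. Since $(11k\pm 3)^2\equiv 9\pmod{11}$, we have $f(x)\equiv x^5+9\pmod{11}$, and because $x^5\in\{0,\pm1\}$ in $\mathbb{F}_{11}$ the value $f(x)\bmod 11$ lies in $\{8,9,10\}$; as $9$ is the only quadratic residue among these, the affine $\mathbb{F}_{11}$-points are exactly $(0,\pm 3)$, so $\#\overline{C_k}(\mathbb{F}_{11})=3$ after adding the single point at infinity. I would also check good reduction at $11$ by noting that $x^5+9$ is separable modulo $11$ (its derivative $5x^4$ vanishes only at $x=0$, where $f$ does not). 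Next I would exhibit five rational points: the identity $f(-11)=(-11)^5+11\cdot(-11)^4+(11k\pm3)^2=(11k\pm3)^2$ together with $f(0)=(11k\pm3)^2$ produce the four affine points $(0,\pm(11k\pm3))$ and $(-11,\pm(11k\pm3))$, which together with $\infty$ are five distinct points since $11k\pm3\neq 0$ for every $k$ in the two lists.

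The second step is the rank input, which I would supply not by a hand computation of $r$ but by combining the two stated bounds. The five points already rule out $r=0$: if $r=0$ then $r<g-1=1$ and $11>2r+2=2$, so Stoll's theorem (Theorem \ref{S-rank}) gives $\#C_k(\mathbb{Q})\le \#\overline{C_k}(\mathbb{F}_{11})+2r=3$, contradicting the five points found. Hence $r\ge 1$. For the opposite inequality I would run a $2$-descent in Magma; since $f$ is irreducible over $\mathbb{Q}$ (so $J(\mathbb{Q})[2]=0$), the $2$-Selmer computation yields $r\le \dim_{\mathbb{F}_2}\Sel^{(2)}(J/\mathbb{Q})$, and I expect this to equal $1$ for each listed $k$. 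Together these give $r=1<g=2$, so Coleman's bound (Theorem \ref{C-Bound}) applies with $p=11>2g=4$ and yields $\#C_k(\mathbb{Q})\le 3+2=5$. Combined with the five exhibited points this forces $\#C_k(\mathbb{Q})=5$ and identifies the rational points exactly as stated.

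For part (2), the key reduction is that geometric simplicity of $J(C_k)$ follows from absolute simplicity of a single good reduction: any isogeny $J_{\overline{\mathbb{Q}}}\sim A\times B$ would be defined over a number field and would reduce, at a suitable prime above a prime of good reduction, to a splitting of $J$ over a finite extension of the residue field, so it suffices to find one prime $p$ at which $J(C_k)\bmod p$ is absolutely simple over $\mathbb{F}_p$. I would therefore pick a prime $p$ of good ordinary reduction, compute the Frobenius characteristic polynomial $P_p(T)=T^4-aT^3+bT^2-apT+p^2$, and verify that it is irreducible (giving simplicity over $\mathbb{F}_p$, hence over $\mathbb{Q}$) and ordinary. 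For a simple ordinary abelian surface, failure of absolute simplicity can occur only if $K=\mathbb{Q}(\pi)$ acquires a proper subfield after raising $\pi$ to a power, i.e.\ either $\pi/\overline{\pi}$ is a root of unity (the $E^2$ case) or the quartic CM field $K$ contains an imaginary quadratic subfield (the $E_1\times E_2$ case). I would rule out the former by checking that $P_p$ is not of the special form $T^4+bT^2+p^2$, and the latter by verifying that $K$ is not biquadratic, e.g.\ by computing the Galois group of $P_p$. The main obstacle is precisely part (2): Lemma \ref{HZ-irred} is stated only for $n=[K^+:\mathbb{Q}]>2$, whereas here $n=2$, so it does not apply verbatim and the genus-two case must be handled by the analogous finite check on the quartic CM field; the delicate point is that absolute simplicity is a statement over all extensions, so simplicity over $\mathbb{Q}$ or irreducibility of a single Frobenius polynomial is not by itself enough, and one must genuinely exclude both the $E^2$ and $E_1\times E_2$ degenerations.
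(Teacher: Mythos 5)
Your proof of part (1) is correct and follows the paper's route essentially verbatim: the same count $\overline{C_k}(\mathbb{F}_{11})=\{(0,\pm 3),\infty\}$, the same five visible points coming from $f(0)=f(-11)=(11k\pm 3)^2$, a Magma $2$-descent certifying $r\leq 1$ (the paper invokes \texttt{RankBound}, which is the same computation), and then Coleman's bound $\#\overline{C_k}(\mathbb{F}_{11})+2g-2=5$. Your appeal to Stoll's Theorem \ref{S-rank} to force $r\geq 1$ is valid but unnecessary for the statement; the paper instead deduces $r=1$ afterwards from sharpness via Corollary \ref{Rank-corollary}. For part (2) the paper's proof is a one-line appeal to Magma together with Lemma \ref{HZ-irred}, finding for each $k$ a prime at which the reduction of $J(C_k)$ is absolutely simple; your specialization argument reducing absolute simplicity over $\overline{\mathbb{Q}}$ to absolute simplicity of one good reduction is exactly the intended mechanism. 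The one place you genuinely go beyond the paper is your observation that Lemma \ref{HZ-irred} as stated carries the hypothesis $n=[K^+:\mathbb{Q}]>2$, whereas an abelian surface has $n=2$: this is a fair criticism of the citation, and for genus two one must instead run the quartic CM field analysis you sketch (Frobenius polynomial irreducible and ordinary, not of the shape $T^4+bT^2+p^2$, and $\mathbb{Q}(\pi)$ not biquadratic, so that no power of $\pi$ can generate an imaginary quadratic subfield). Since that finite check is what the Magma verification actually performs, your version is best read as a more carefully justified account of the same computation rather than a different proof; your phrasing of the dichotomy is slightly loose (both degenerations ultimately amount to some $\pi^r$ landing in an imaginary quadratic subfield of $K$), but the plan is sound.
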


\begin{proof}
(1) In both cases, we run the Magma command {\tt RankBound} to compute an upper bound on the rank of the Jacobian of curves $C_k$ for $0\leq k \leq 50$. We listed above all $k$'s in both cases for which the upper bound is less than two. As we saw, $p=11$ is a prime of good reduction of all curves $C_k$ and $\#\overline{C_k}(\mathbb{F}_{11})=3$, so we conclude by Theorem \ref{C-Bound} that $\#C_k(\mathbb{Q})\leq 5$. The fact that we have five obvious rational points on each $C_k$ finishes the proof. All listed curves $C_k$ are sharp and have rank $r=1$.\\\\
(2) Using Magma and Lemma \ref{HZ-irred} we check that all $J(C_k)$ are absolutely simple by finding a prime $p$ such that the reduction of $J(C_k)$ modulo $p$ is absolutely simple.\\
\end{proof}

\subsection{An example with descent and a sharp curve} \label{Chabauty+descent}

There are many examples of applying descent from curves of higher genus to curves of genus zero or one in the literature. Lack of examples of curves whose rational points can be determined by applying descent to curves of genus at least two motivated us to construct one such curve.

\begin{proposition}
Let $C/\mathbb{Q}$ be the hyperelliptic curve given by the equation
$$C:y^2=f(x):=(x^6+11x^5+64x+729)(x^5+11x^4+64).$$
The set of rational points on $C$ is
$$C(\mathbb{Q})=\{(0,\pm216), (-11,\pm40), \infty\}.$$
\end{proposition}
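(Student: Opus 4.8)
The plan is to apply the descent method of \cref{Descent-Rational-Points} to the factorization $f = f_1 f_2$ with $f_1(x) = x^6+11x^5+64x+729$ and $f_2(x) = x^5+11x^4+64$. Here $\deg f_1 = 6$ is even, so the hypothesis of the method is satisfied, and since $\deg f = 11$ the curve $C$ has genus $5$ with a single rational point at infinity. For any $(x_0,y_0)\in C(\mathbb{Q})$ there is a unique squarefree $d\in\mathbb{Z}$ with $f_1(x_0)=dz_0^2$ and $f_2(x_0)=dt_0^2$, and $y_0 = dz_0t_0$; thus $C(\mathbb{Q}) = \bigcup_d \pi_d(C_d(\mathbb{Q}))$, where $C_d$ is the cover $f_1(x)=dz^2,\ f_2(x)=dt^2$. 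First I would compute $R = \Res(f_1,f_2)$ and note that the primes dividing a relevant $d$ are restricted to those dividing $R$ (together with $2$ and the sign supplied by the real place), which reduces the set of squarefree $d$ to a finite list.

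Second, I would eliminate every $d$ other than $d=1$ by local solvability. Each surviving $C_d$ must carry points over $\mathbb{R}$ and over $\mathbb{Q}_p$ for the finitely many bad primes $p$, so I would check at each such place whether $df_1(x)$ and $df_2(x)$ can simultaneously be squares. Since $f_1$ and $f_2$ have positive leading coefficients, the real place forces $d>0$, and I expect the $p$-adic conditions to be restrictive enough to leave only $d=1$. Verifying these local obstructions prime by prime is the main obstacle, since this is precisely where a stray value of $d$ could survive and require a separate, more delicate argument.

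Third, for $d=1$ I would pass to the genus-two quotient $F_1 : u^2 = f_2(x) = x^5+11x^4+64$, which forgets the first coordinate. This is exactly the curve $C_k$ of Proposition \ref{example3} in the family $x^5+11x^4+(11k-3)^2$ with $k=1$, since $11\cdot 1 - 3 = 8$ and $8^2=64$; that curve is sharp of rank one, so by Proposition \ref{example3} its rational points are precisely $\{(0,\pm 8),(-11,\pm 8),\infty\}$. Hence the only finite $x$-coordinates that can arise from $d=1$ are $x\in\{0,-11\}$, together with the point at infinity.

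Finally, I would lift these $x$-coordinates to $C$ using the remaining factor: the points of $C_1(\mathbb{Q})$ are the $x$ for which \emph{both} $f_1(x)$ and $f_2(x)$ are squares. At $x=0$ one has $f_1(0)=729=27^2$ and $f_2(0)=64=8^2$, and at $x=-11$ one has $f_1(-11)=25=5^2$ and $f_2(-11)=64=8^2$, so both values lift, yielding $y = z_0t_0 = \pm 27\cdot 8 = \pm 216$ and $y = \pm 5\cdot 8 = \pm 40$ respectively, while the point at infinity of $F_1$ pulls back through $C_1$ to $\infty$ on $C$. Collecting these, and recalling that $d=1$ is the only contributing value, gives $C(\mathbb{Q}) = \{(0,\pm 216),(-11,\pm 40),\infty\}$, as claimed.
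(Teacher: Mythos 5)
Your overall strategy coincides with the paper's: descend via $f=f_1f_2$, use $\Res(f_1,f_2)=3^{30}$ to confine the squarefree part to $d\in\{\pm1,\pm3\}$ (for monic $f_1,f_2$ the resultant argument already covers $p=2$, so there is no need to adjoin $2$ to the list), and for $d=1$ reduce to the genus-two curve $x^5+11x^4+64=z^2$, which is the $k=1$ member of the second family in Proposition \ref{example3}; your lifting of $x\in\{0,-11\}$ to $(0,\pm216)$ and $(-11,\pm40)$ is exactly the paper's. One small repair: ``positive leading coefficients'' does not by itself force $d>0$, since $f_2$ has odd degree and is negative for $x\ll 0$. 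The correct one-line argument (the paper's) is that $f_2(x_0)<0$ forces $x_0<0$, whence $f_1(x_0)=x_0f_2(x_0)+729>0$, so $f_1$ and $f_2$ are never simultaneously negative.

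The genuine gap is $d=3$, which you leave to an unverified hope that local solvability disposes of it; this is the crux of the whole proof and cannot be deferred. The paper eliminates $d=3$ globally: substituting $X=3x$, $Y=27z$ turns $x^5+11x^4+64=3z^2$ into $Y^2=X^5+33X^4+64\cdot3^5$, whose Jacobian is computed in Magma to have trivial Mordell--Weil group, so that curve has only its point at infinity. Your proposed local route does in fact work, but you must write it out: for $x\in\mathbb{Z}_3$ one has $x^5+11x^4+64\equiv x^5+2x^4+1\not\equiv 0\pmod{3}$, so $v_3(f_2(x))=0$, which cannot equal the odd valuation $1+2v_3(z)$ of $3z^2$; and for $v_3(x)<0$ one has $v_3(f_2(x))=5v_3(x)$, which is odd only if $v_3(x)$ is odd, in which case $v_3(f_1(x))=v_3(xf_2(x)+3^6)=6v_3(x)$ is even, contradicting $f_1(x)=3t^2$. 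Hence $C_3$ has no affine $\mathbb{Q}_3$-point and contributes nothing. Either completion is acceptable---the paper's needs a Magma rank computation, yours is elementary---but as submitted the argument for the decisive case is missing.
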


\begin{proof}
As we can see, the polynomial $f$ is monic, so when applying the descent method we need to compute the resultant of two polynomials, whose product is $f$, which is
$$R:=\Res(x^6+11x^5+64x+729,x^5+11x^4+64))=3^{30}.$$
We are only interested in the radical of the resultant, since it is enough to consider squarefree numbers, and $\rad(R)=3$.
Thus, after applying the descent, we know that $x$ corresponds to an $x$-coordinate of a rational point on one of curves
$$x^5+11x^4+64=dz^2, \quad x^6+11x^5+64x+729=dt^2$$
for some $d\in\{-3,-1,1,3\}.$ \\

We first prove that $d<0$ is impossible. If $x^5+11x^4+64<0$, then $x<0$ and so 
$$x(x^5+11x^4+64)+729>0,$$ 
giving a contradiction. 

If $d=3$, then we can use the linear change of coordinates $X=3x$, $Y=3^3z$, and then the equation of the curve $$C_3: x^5+11x^4+64=3z^2$$ transforms to $$C'_3:X^5+33X^4+64\cdot 3^5=Y^2.$$ For the curve $C'_3$, we compute in Magma that $J(C'_3)(\mathbb{Q})=\left<0\right>$. We know that $C'_3(\mathbb{Q})$ embeds into $J(C'_3)(\mathbb{Q})$, so $C'_3$ has at most one rational point: this is the one at infinity.\\

If $d=1$, then we consider the hyperelliptic curve $$C_1: x^5+11x^4+64=z^2,$$ 
which is one of the curves in Proposition \ref{example3}, and we already know 
$$C_1(\mathbb{Q})=\{(0,\pm8),(-11,\pm8),\infty\}.$$
We see that all of these points give rise to the rational points of $C$, and that
$$C(\mathbb{Q})=\{(0,\pm216),(-11,\pm40),\infty\}.$$
\end{proof} 

\noindent{\bf Comment.} Let $r$ be the rank of the Jacobian of $C$ over $\mathbb{Q}$. Assuming the Generalized Riemann Hypothesis, Magma gives that $2\leq r\leq 4$. So, in principle, we would be able to apply the abelian Chabauty method to $C$. However, this seems to be more difficult than the approach in the proof, which is furthermore unconditional. As we can check in Magma, the Jacobian of $C$ is absolutely simple.

\subsection{A sharp curve with the smallest possible number of points} \label{Small curve}

We give an example of a sharp curve that satisfies the property that it has the smallest number of rational points amongst all sharp curves of genus $g\geq 2$. For a sharp curve $C$ of genus $g\geq 2$ it holds $\#C(\mathbb{Q})\geq 3$. Indeed, we know that in that case
$$\#C(\mathbb{Q})=\#\overline{C}(\mathbb{F}_{p})+2g-2\geq \#\overline{C}(\mathbb{F}_{p})+2\geq 2.$$ Therefore, the set $C(\mathbb{Q})$ is non-empty, implying that also $\overline{C}(\mathbb{F}_{p})$ is non-empty, giving the desired conclusion.
If we want to construct such a curve, we want a curve of genus two with only one residue disc. If we consider a hyperelliptic curve $C:y^2=f(x)$, where $f$ is a monic polynomial of degree 5, we already know that we have one residue disc at infinity. So, we want to construct a curve having no other residue discs, meaning that the reduction over $\mathbb{F}_p$ can have only the point at infinity. We will use a similar strategy as before, so that 
$$\overline{C}:y^2=x^5+c/\mathbb{F}_{11}.$$
If $c\equiv7\pmod{11}$, then $\overline{C}$ has only one rational point, $\overline{C}(\mathbb{F}_{11})=\{\infty\}$ because 6, 7, and 8 are not quadratic residues modulo 11, and $C$ has good reduction at $p=11$. There should be two more rational points on $C$. Let one of them be $P=(a,b)$. Then $P$ maps to infinity modulo 11. Furthermore, we know that the denominator of $a$ is divisible by $11^2$ since otherwise $f(a)$ is not a square of a rational number. Let us try to find an example such that $a$ is already a square of a rational number and that $a$ is a zero of a polynomial $f(x)-x^5$, with $f$ satisfying the conditions above.\\

Let $C$ be the hyperelliptic curve over $\mathbb{Q}$ defined by
$$C: y^2=x^5+121x-4.$$
Magma gives us that $r$, the rank of the Jacobian of $C$ over $\mathbb{Q}$, is zero or one. We can apply Theorem \ref{C-Bound}, and by the previous discussion,
$$C(\mathbb{Q})=\left\{\left(\dfrac{4}{121},\pm \dfrac{2^5}{11^5}\right),\infty\right\}.$$

We also know that $r=1$ in this case by Corollary \ref{Rank-corollary}. We could also conclude this in another way: Using Magma we see that the torsion subgroup is trivial, which is impossible when $r=0$ and $\#C(\mathbb{Q})>1$. 

\subsection{Examples on improving lower rank bounds in Magma} \label{Lower bounds rank}

We now present two examples of excessive curves, for which we can improve Magma's lower bounds for the rank. In both examples, we can determine the rank of the curve, although Magma gives inconclusive information. 

Again, it is desirable to have a small number of residue discs, and in particular, we would like only one residue disc. This time, we will consider $p=5$ and the curve
$$C:y^2=8x^6-314x^5+3250x^4-10000x^3+64x=x(2(x-25)(4x-25)(x^3-8x^2)+64).$$ 
The reduction of $C$ modulo 5 is
$$\overline{C}:y^2=x(3x^5+x^4-1)/\mathbb{F}_5$$
We easily see that $C$ has good reduction at $p=5$ and that $\overline{C}(\mathbb{F}_5)=\{(0,0)\}$. We can find at least five points on this curve:
$$\left\{(0,0),\left(\dfrac{25}{4},\pm20\right),(25,\pm40)\right\}\subset C(\mathbb{Q}),$$
so $C$ is excessive. Magma gives us that the rank $r$ of the Jacobian of $C$ satisfies $0\leq r\leq 2$. Hence, we conclude that $r=2$.\\

Note that we do not find the set $C(\mathbb{Q})$ here because Chabauty's condition $r<g$ is violated. It might be possible to compute $C(\mathbb{Q})$ using Quadratic Chabauty.\\

The following example is conditional, i.e., for the rank computations in Magma we need to assume the Generalized Riemann Hypothesis. We can do the same for the following curve
$$C: y^2=x^5-12(121x-1)(121x-4)$$
looking at the good prime $p=11$. This curve has at least five rational points, so it is excessive. Magma gives us that the rank of its Jacobian over $\mathbb{Q}$ is between zero and two, thus, it is two.\\

\noindent{\bf Remark.} We see that a curve $C/\mathbb{Q}$ with many rational points (at least $2g-2$) might be potentially sharp or excessive. There is an algorithm to check that. We need to determine all prime numbers $p$ for which $C$ can be potentially sharp or excessive at $p$. We use the Hasse-Weil bounds and the number of known rational points to give an upper bound for primes $p$, so we need to check finitely many of them. Therefore, if we find that the curve is potentially sharp or excessive, we obtain a lower bound for the rank, $r\geq g-1$, or $r\geq g$, respectively. \\

Note that here we only considered primes of good reduction. There are results similar to Coleman's bound when we consider primes of bad reduction, and these can be found in \cite{Lorenzini-Tucker} and improved, with a bound in Stoll's style, in \cite{Katz-DZB}. It would be interesting to investigate the case of bad reduction in the future. In this case, we might be able to construct infinite families of examples with sharp bounds. We could then allow multiple Weierstrass points in the same residue discs, which makes computations of ranks of curves easier and hope that we can determine the ranks of Jacobians for an infinite number of curves.

\section{Examples of sharp curves of higher genus}

We present a few sharp curves of genus $g>2$. There is a way to construct potentially sharp curves, which works for any genus $g$ and the construction is contained in \cref{Potential examples}, including examples of sharp curves of genus $g=4$ and $g=5$, one of each. In some cases, there is an easier way to find examples of sharp curves, and it is presented in the following subsection.

\subsection{Concrete examples of sharp curves of genus three, four and five} \label{Small large genera}

Let $g$ be a positive integer. In the case where $2g+1$ or $2g+3$ is a prime number, we can construct examples of potentially sharp curves in the following ways.\\

If $2g+1=p$, where $p$ is some prime number, then $x^{2g+1}-x\equiv 0\pmod{p}$ for all $x\in\mathbb{F}_p$. If $c$ is any quadratic nonresidue modulo $p$, then for the curve 
$$\overline{C}:y^2=x^{2g+1}-x+c/\mathbb{F}_p$$
we have $\overline{C}(\mathbb{F}_p)=\{\infty\}$.
So, all possible rational points on the curve $C$ reduce modulo $p$ to $\infty$, thus are not integral and have denominators divisible by $p$. Let $a_1,\ldots,a_{g-1}$ be integers not divisible by $p$, with distinct absolute values. Let $b\in\mathbb{Z}$ be any inverse of $(a_1\dots a_{g-1})^2$ modulo $p$. The following curve is potentially sharp:
$$C:y^2=x^{2g+1}+b(a_1^2-p^2x)\cdots(a_{g-1}^2-p^2x)(c-x).$$

If $2g+3=p>3$ is prime, then we can use the property that there are two consecutive quadratic nonresidues modulo $p$. If $p\equiv 1\pmod{4}$, then between 2 and $p-2$ there are $\frac{p-1}{2}$ quadratic nonresidues implying that two of them must be consecutive. If $p\equiv 3\pmod{4}$, $-1$ and $-4$ are quadratic nonresidues, so if $-2$ or $-3$ is a quadratic nonresidue, we have the conclusion. If not, then 2 and 3 are consecutive quadratic nonresidues. Denote by $c$ and $c+1$ two consecutive quadratic nonresidues modulo $p$. Then the curve
$$\overline{C}:y^2=x^{2g+2}+c/\mathbb{F}_p$$
has two $\mathbb{F}_p$-points, both at infinity because, for $x\in\mathbb{F}_p$, we have that
$$x^{2g+2}+c\equiv \{c,c+1\}\pmod{p}.$$
We construct examples from this curve. Let $a_1,\ldots,a_{g-1}$ be distinct integers not divisible by $p$.  Let $b\in\mathbb{Z}$ be any inverse of $a_1\cdots a_{g-1}$ modulo $p$. The following curve is potentially sharp:
$$C':y^2=x^{2g+2}+b(a_1-px)\cdots(a_{g-1}-px)c.$$

We summarize the conclusions above into the theorem.

\begin{theorem}
Let $g\geq 2$ be a positive integer such that $2g+1$ or $2g+3$ is a prime number. Define $C$ and $C'$ as above in each case. If $r$, the rank of the Jacobian of $C$ over $\mathbb{Q}$, satisfies $r<g$, then $r=g-1$, and
$$C(\mathbb{Q})=\left\{\left(\dfrac{a_1^2}{p^2},\pm\dfrac{a_1^{2g+1}}{p^{2g+1}}\right),\ldots,\left(\dfrac{a_{g-1}^2}{p^2},\pm\dfrac{a_{g-1}^{2g+1}}{p^{2g+1}}\right),\infty\right\}.$$
If $r'$, the rank of the Jacobian of $C'$ over $\mathbb{Q}$, satisfies $r'<g$, then $r'=g-1$, and
$$C'(\mathbb{Q})=\left\{\left(\dfrac{a_1}{p},\pm\dfrac{a_1^{g+1}}{p^{g+1}}\right),\ldots,\left(\dfrac{a_{g-1}}{p},\pm\dfrac{a_{g-1}^{g+1}}{p^{g+1}}\right),\infty_{\pm}\right\}.$$
\end{theorem}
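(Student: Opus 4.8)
\section*{Proof proposal}

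The plan is to establish both assertions simultaneously, since each one reduces to showing that the curve in question is \emph{sharp} at the relevant prime --- $p = 2g+1$ in the first case, $p = 2g+3$ in the second --- after which Corollary \ref{Rank-corollary} immediately forces the rank to equal $g-1$. The driving mechanism is a pincer between the upper bound of Theorem \ref{C-Bound} and a lower bound produced by writing down rational points explicitly. Before anything else I would check that the hypotheses of Theorem \ref{C-Bound} hold: the assumptions $r < g$ (resp.\ $r' < g$) are given, and $p > 2g$ holds in both cases because $2g+1 > 2g$ and $2g+3 > 2g$.

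The first step is to confirm good reduction and to determine the $\mathbb{F}_p$-points of the reduction. Reducing the defining polynomial modulo $p$ and using that $b$ inverts $(a_1\cdots a_{g-1})^2$ (resp.\ $a_1\cdots a_{g-1}$) collapses the product factor to a constant, recovering exactly the reductions $y^2 = x^{2g+1}-x+c$ and $y^2 = x^{2g+2}+c$ displayed just before the theorem; separability follows by verifying that $\overline{f}$ and $\overline{f}'$ share no root, which is clear since $\overline{f}' \equiv -1$ in the first case and $\overline{f}' \equiv -x^{p-2}$ (whose only zero $x=0$ satisfies $\overline{f}(0)=c\neq 0$) in the second. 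The point counts then follow from Fermat's little theorem: in the first case $x^{2g+1}\equiv x$ forces $y^2\equiv c$, a nonresidue, so $\overline{C}(\mathbb{F}_p)=\{\infty\}$; in the second $x^{2g+2}\in\{0,1\}$ forces $y^2\in\{c,c+1\}$, both nonresidues, so the reduction has only the two points at infinity. Theorem \ref{C-Bound} therefore yields $\#C(\mathbb{Q})\leq 2g-1$ and $\#C'(\mathbb{Q})\leq 2g$.

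For the matching lower bound I would substitute the candidate $x$-coordinates directly: putting $x=a_i^2/p^2$ (resp.\ $x=a_i/p$) annihilates the factor $a_i^2-p^2x$ (resp.\ $a_i-px$), so the entire product term vanishes and $f$ collapses to the pure power $x^{2g+1}$ (resp.\ $x^{2g+2}$), a perfect square whose square roots are the displayed $y$-values. Distinctness follows from the hypotheses --- distinct $|a_i|$ in the first case, distinct $a_i$ in the second --- while $p\nmid a_i$ guarantees all these $y$-coordinates are nonzero, so each conjugate pair is genuinely two points. Counting the $g-1$ pairs together with the one (resp.\ two) points at infinity gives exactly $2g-1$ (resp.\ $2g$) rational points, meeting Coleman's bound. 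Hence each curve is sharp, Corollary \ref{Rank-corollary} gives $r=g-1$ (resp.\ $r'=g-1$), and the pincer identifies the full rational point set with the displayed list.

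I do not expect a deep obstacle here: the argument is essentially a verification that the construction behaves as advertised, and the one thing requiring genuine care is that \emph{every} count be exact, since the pincer collapses only if the $\mathbb{F}_p$-point count and the number of exhibited rational points agree on the nose. The most delicate ingredient is the existence, for every prime $p=2g+3>3$, of two consecutive quadratic nonresidues $c,c+1$ underpinning the second point count, but this is precisely the elementary case analysis already carried out before the statement. A reassuring consistency check --- not strictly needed for the proof --- is that every affine rational point found has $p$ dividing the denominator of its $x$-coordinate and so reduces to infinity, matching the single residue disc (resp.\ the pair of discs at infinity) that the bound makes available.
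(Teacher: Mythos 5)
Your proposal is correct and follows essentially the same route as the paper: the paper's proof is simply ``follows from the above and Corollary \ref{Rank-corollary},'' where ``the above'' is precisely the verification you spell out --- good reduction, the residue point counts $\#\overline{C}(\mathbb{F}_p)=1$ and $\#\overline{C'}(\mathbb{F}_p)=2$, the explicit rational points meeting Coleman's bound from Theorem \ref{C-Bound}, and the rank conclusion via Corollary \ref{Rank-corollary}. Your write-up just makes explicit the details (separability of the reduced polynomial, distinctness and nonvanishing of the exhibited points) that the paper leaves implicit.
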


\begin{proof}
Follows from the above and Corollary \ref{Rank-corollary}.
\end{proof}

We give one example of a sharp curve in each genus $g\in\{3,4,5\}$. For these curves we are able to check the rank condition $r<g$. However, we need to assume the Generalized Riemann Hypothesis for the rank computations in Magma. The curves are as follows:
$$C_3:y^2= x^7 - (49x-1)(49x-36)(x+1), \hspace*{2mm} r(J(C_3)(\mathbb{Q}))=2,$$
$$ C_3(\mathbb{Q})=\left\{\left(\dfrac{1}{49},\pm\dfrac{1}{7^7}\right), \left(\dfrac{36}{49},\pm\dfrac{6^7}{7^7}\right), \infty\right\};$$
$$C_4:y^2 = x^{10} - (11x-3)(11x-4)(11x-6), \hspace*{2mm} r(J(C_4)(\mathbb{Q}))=3,$$
$$C_4(\mathbb{Q})=\left\{\left(\dfrac{3}{11},\pm\dfrac{3^5}{11^5}\right),\left(\dfrac{4}{11},\pm\dfrac{4^5}{11^5}\right), \left(\dfrac{6}{11},\pm\dfrac{6^5}{11^5}\right), \infty_{\pm}\right\};$$
$$C_5:y^2 = x^{12} - (13x-1)(13x-2)(13x-3)(13x-12), \hspace*{2mm} r(J(C_5)(\mathbb{Q}))=4,$$
$$C_5(\mathbb{Q})=\left\{\left(\dfrac{1}{13},\pm\dfrac{1}{13^6}\right), \left(\dfrac{2}{13},\pm\dfrac{2^6}{13^6}\right), \left(\dfrac{3}{13},\pm\dfrac{3^6}{13^6}\right), \left(\dfrac{12}{13},\pm\dfrac{12^6}{13^6}\right), \infty_{\pm}\right\}.$$

\subsection{Bertrand's Postulate for primes modulo 8} \label{Bertrand mod 8}

For our construction, we will need a stronger version of Bertrand's Postulate. We formulate one version first, and then we cover smaller cases.

\begin{proposition} 
Let $n\geq 15$ be a positive integer. In the interval $[n,2n)$ there is a prime number $p$ such that $p\equiv 5\pmod{8}$.
\end{proposition}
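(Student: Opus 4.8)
I would phrase the statement in terms of the Chebyshev-type function restricted to the progression, namely
\[
\theta(x;8,5)\;=\;\sum_{\substack{p\le x\\ p\equiv 5\,(8)}}\log p ,
\]
since exhibiting a prime $p\equiv 5\pmod 8$ in $[n,2n)$ is equivalent to showing $\theta(2n;8,5)-\theta(n;8,5)>0$. The plan is to prove this strict inequality for all $n$ beyond some explicit threshold $N_0$ by an effective form of the prime number theorem in arithmetic progressions, and then to dispose of the range $15\le n<N_0$ by a finite, machine-checkable argument. The mechanism is Bertrand's original one (compare the main term of a Chebyshev function at $2n$ and at $n$), transplanted to the progression $5\bmod 8$, where the main term has density $1/\varphi(8)=1/4$.

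\textbf{The analytic input.} First I would expand $\theta(x;8,5)$ over the four Dirichlet characters modulo $8$ (all of which are real, since $(\mathbb{Z}/8)^\times$ is an elementary abelian $2$-group) using orthogonality:
\[
\theta(x;8,5)\;=\;\frac14\sum_{\chi\bmod 8}\overline{\chi(5)}\,\theta(x,\chi),
\qquad \theta(x,\chi)=\sum_{p\le x}\chi(p)\log p .
\]
The principal character contributes $\theta(x,\chi_0)=\theta(x)+O(\log x)$, whose main term is $x$, while each of the three nonprincipal characters contributes a sum that is $o(x)$. I would then invoke explicit estimates of the shape $\lvert\theta(x)-x\rvert\le \epsilon\,x$ and $\lvert\theta(x,\chi)\rvert\le \delta\,x$ for the nonprincipal $\chi\bmod 8$, valid for all $x\ge x_0$ with explicit $\epsilon,\delta,x_0$ (such effective bounds for primes in progressions to small moduli are available in the literature, ultimately resting on a verified zero-free region for the associated $L$-functions). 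Combining the two endpoints,
\[
\theta(2n;8,5)-\theta(n;8,5)\;\ge\;\frac{n}{4}\Bigl(1-2(\epsilon+3\delta)-(\epsilon+3\delta)\Bigr),
\]
which is strictly positive once the aggregate error $\epsilon+3\delta$ is small enough, i.e.\ for all $n\ge N_0$ where $N_0$ is determined explicitly from $x_0$ and the constants.

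\textbf{The finite range and the obstacle.} For $15\le n<N_0$ I would avoid checking every $n$ and instead produce, by machine, a sparse increasing ladder of primes $q_0=29<q_1<\cdots<q_m$, each $\equiv 5\pmod 8$, satisfying $q_{j+1}<2q_j$ and $q_m>N_0$; such a ladder needs only about $\log_2 N_0$ entries, so it remains feasible no matter how large $N_0$ turns out to be. The prime $29$ already handles $15\le n\le 29$, because $29<2n$ exactly when $n\ge 15$; and for any $n$ with $q_j\le n<q_{j+1}$ one has $n<q_{j+1}<2q_j\le 2n$, so $q_{j+1}\in[n,2n)$, covering all of $[29,N_0)$. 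The ratio test $q_{j+1}<2q_j$ is precisely where $n=14$ fails (the interval $[14,28)$ contains only $17,19,23$, none $\equiv 5\pmod 8$), which is why the hypothesis $n\ge 15$ is sharp. The main obstacle is not the computation but securing explicit character-sum bounds strong enough to make the bracket above positive with a genuinely usable $N_0$; the weaker the effective zero-free region one is willing to cite, the larger $N_0$ and hence the longer—though still only logarithmically long—the verifying ladder must be.
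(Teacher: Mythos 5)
Your proposal follows essentially the same route as the paper: reduce the claim to $\theta(2n;8,5)>\theta(n;8,5)$, establish this for all $n$ past an explicit threshold by an effective prime-counting estimate in the progression $5\bmod 8$, and cover the finite range with a doubling ladder of primes $\equiv 5\pmod 8$ ending at $29$ (the paper's list from $10000000061$ down to $29$ is exactly such a ladder). The only difference is that where you leave the analytic constants to ``the literature'' via a character decomposition, the paper cites Ramar\'e--Rumely directly, which gives $\left|\theta(y;8,5)-\tfrac{y}{4}\right|\le 0.00456\cdot\tfrac{x}{4}$ for $1\le y\le x$ and $x\ge 10^{10}$, so the threshold is $N_0=10^{10}$ and no separate treatment of the nonprincipal characters is needed.
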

\begin{proof}
Let $\mathbb{P}$ denote the set of prime numbers in $\mathbb{Z}$. For any coprime integers $k$ and $l$, we define the function 
$$\theta(x;k,l)=\sum_{p\in\mathbb{P},\hspace*{1.5mm} p\leq x,\hspace*{1.5mm} p\equiv l\hspace*{-3mm}\pmod{k}} \log(p).$$
Theorem 1, for $k\leq 13$, from \cite{Ramare-Rumely}, states that
$$\max_{1\leq y\leq x}\left|\theta(y;k,l)-\dfrac{y}{\varphi(k)}\right|\leq \varepsilon\dfrac{x}{\varphi(k)},$$
where one can take $\varepsilon=0.00456$ for $x\geq 10^{10}$. It implies that when $y=x$
$$\left|\theta(x;8,5)-\dfrac{x}{4}\right|\leq 0.00456 \dfrac{x}{4}\leq 0.005 \dfrac{x}{4}\implies 0.995 \dfrac{x}{4}\leq \theta(x;8,5)\leq 1.005\dfrac{x}{4}$$
for $x\geq 10^{10}$.
If $x\geq 10^{10}$, then also
$$\theta(x;8,5)\leq 1.005\dfrac{x}{4} < 0.995 \dfrac{2x}{4}\leq \theta(2x;8,5)$$
giving that $\theta(2x;8,5)>\theta(x;8,5)$ for $x\geq 10^{10}$. It follows that there is a prime congruent to 5 modulo 8 between $x$ and $2x$. We need to prove the statement for the remaining interval $[15,10^{10}]$. We can easily find a list of primes congruent to 5 modulo 8 such that the next one is larger than half of the previous one (e.g., we can use Magma for it), which completes the proof. For instance, we can take the list to be
$$10000000061, 5000000141, 2500000117, 1250000077, 625000069, 312500077, 156250093, $$
$$78125141, 39062581, 19531381,
9765757, 4882957, 2441573, 1220797, 610429, 305237, $$ 
$$152629, 76333, 38189, 19141, 9613, 4813, 2437, 1229, 653, 349, 181, 101, 53, 29.$$
\end{proof}

\begin{proposition} \label{B-primes}
Let $n\geq 2$ be a positive integer. Then in the interval $[n,2n)$ there is a prime number $p$ such that $p\equiv 3\pmod{8}$ or $p\equiv 5\pmod{8}$.
\end{proposition}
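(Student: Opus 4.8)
The plan is to reduce immediately to the preceding proposition and then clear the finitely many remaining small values of $n$ by direct inspection. The crucial point is that the statement to be proved is strictly \emph{weaker} than the one just established: the preceding proposition guarantees, for every $n \geq 15$, a prime $p \in [n, 2n)$ with $p \equiv 5 \pmod 8$, and any such prime automatically satisfies the disjunction ``$p \equiv 3 \pmod 8$ or $p \equiv 5 \pmod 8$.'' Thus for all $n \geq 15$ the conclusion is immediate, with no additional argument.

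It therefore remains to handle the cases $2 \leq n \leq 14$. First I would record which small primes fall into the two admissible residue classes; listing the primes up to $27$ with their residues modulo $8$ shows that $3, 11, 19 \equiv 3 \pmod 8$ and $5, 13 \equiv 5 \pmod 8$, while $7, 23 \equiv 7$ and $17 \equiv 1$ are inadmissible. Then for each $n$ with $2 \leq n \leq 14$ I would exhibit one admissible prime lying in the half-open interval $[n, 2n)$, observing that a single prime typically covers a block of consecutive $n$: the prime $3$ handles $n \in \{2, 3\}$, the prime $5$ handles $n \in \{4, 5\}$, the primes $11$ and $13$ jointly cover $6 \leq n \leq 13$, and $19 \in [14, 28)$ settles $n = 14$. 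This is a routine finite check that can be confirmed in Magma.

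The proposition then follows by combining this finite verification with the preceding proposition. I do not expect a genuine obstacle here, since all of the analytic difficulty, namely the explicit Chebyshev bound of \cite{Ramare-Rumely} and the descending chain of primes congruent to $5$ modulo $8$, was already absorbed into the proof of the previous proposition. The only point demanding care is bookkeeping in the small range: one must verify that every integer $n$ from $2$ to $14$ is covered with no gap, and in particular keep the intervals half-open as $[n, 2n)$ so that, for example, $n = 2$ is handled by $3 \in [2, 4)$ rather than relying on a prime at the excluded right endpoint.
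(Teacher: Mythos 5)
Your proposal is correct and follows essentially the same route as the paper: invoke the preceding proposition for $n\geq 15$ and cover $2\leq n\leq 14$ by the small admissible primes (the paper uses $3,5,11,19$; your list adds $13$, which is harmless). The finite check is carried out correctly, including the care with the half-open interval.
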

\begin{proof}
By the previous proposition, we only need to check small cases, but the statement is trivially true since 3, 5, 11, 19 are primes that fill in the missing gaps.
\end{proof}

As a corollary, in each interval $[n,2n)$, where $n>1$ is a positive integer, there is a prime $p$ such that 2 is not a quadratic residue modulo $p$, which is the property we will use in our construction of curves in the following subsection.

\subsection{Potentially sharp curves of genus $g\geq 2$} \label{Potential examples}

Let $g\geq 2$ be a positive integer. We will construct potentially sharp hyperelliptic curves $C:y^2=f(x)$ of genus $g$, where $f$ is a monic polynomial. For simplicity, we present the construction and ideas in a simpler way. We add a comment on how we can slightly generalize it using the same ideas after Theorem \ref{Main examples} and use these generalized ideas to construct concrete examples.\\

By our version of Bertrand's postulate, Proposition \ref{B-primes}, there is a prime number $p$, congruent to 3 or 5 modulo 8, that is contained in the interval $(2g+2,4g+4)$. Define the polynomial 
$$Q(x)=x^{2g+2}-x^{\frac{p-1}{2}}+x^{2g+2-\frac{p-1}{2}}+1.$$
Then we have the following congruence modulo $p$:
$$Q(x)\equiv \begin{cases}
2x^{2g+2} \pmod{p}, \text{if } \left(\dfrac{x}{p}\right)=1,\\
2 \pmod{p}, \text{if } \left(\dfrac{x}{p}\right)=-1,\\
1 \pmod{p}, \text{if } p\mid x.
\end{cases}
$$
Since $p$ is chosen so that 2 is quadratic nonresidue modulo $p$, it follows that
$$\overline{C}(\mathbb{F}_p)=\{(0,\pm1),\infty_{\pm}\},$$
if $\overline{C}$ is the hyperelliptic curve over $\mathbb{F}_p$, defined by $y^2=Q(x)$.\\

For each $s\leq g$, $s\in\mathbb{N}$, we will construct curves $C_s:y^2=H_s$, whose reduction modulo $p$ is $\overline{C}$ and which will have at least $4+2s$ rational points. Four rational points on $C_s$ will be from the set $\{(0,\pm1),\infty_{\pm}\}$, and we need to add $2s$ points, i.e. $s$ values of $x$ for which $H_s(x)$ is a square of a rational number. We will construct $H_s$ such that new rational points on $C_s$ are integral, so that all $x$-coordinates will be integers divisible by $p$. We note that then $C_{g-1}$ is a potentially sharp curve and $C_g$ excessive. \\

Let $a_1,\ldots,a_{s}\in\mathbb{Z}$ be distinct and non-zero. We will construct the polynomial $H_s$ such that $H_s(pa_1)$, $H_s(pa_2)$, $\ldots$, $H_s(pa_s)$ are squares of integers. The strategy is as follows. All monomials $x^k$ with $k>s$ will be replaced by 
$$t_s(x^k):=x^{k-s}(x-pa_1)(x-pa_2)\cdots(x-pa_s).$$
We extend this transformation linearly so that we can compute $t_s(P)\in\mathbb{Z}[x]$ for all polynomials $P\in\mathbb{Z}[x]$ that have only monomials in $x$ of degree greater than $s$. This transformation has two important properties, namely for all such $P\in\mathbb{Z}[x]$, we have
$$t_s(P)\equiv P \pmod{p},$$
$$t_s(P)(0)=t_s(P)(pa_1)=\cdots=t_s(P)(pa_s)=0.$$
We can apply this transformation to the higher degree part of $Q(x)$, e.g. to the polynomial $x^{2g+2}-x^{\frac{p-1}{2}}$ (note that $\frac{p-1}{2}>g\geq s$). We need to deal with the remaining part 
$1+x^l$, where $l:=2g+2-\frac{p-1}{2}$. If $l>s$, then we can apply $t_s$ to $x^l$ and define a polynomial 
$$Z_s(x):=t_s(x^{2g+2}-x^{\frac{p-1}{2}}+x^l)+1.$$
If $s\geq l$, the polynomial $1+x^l$ will be transformed into a part of a square of some polynomial. In this way, we can assure that $H_s(pa_i)$ is a square for $i=1,\ldots,s$. Denote $m\in\mathbb{N}$ the number for which $ml\leq s<(m+1)l$. We use the following lemma.
\begin{lemma} \label{Coeffs}
There exist integers $c_1$, $c_2$, $\ldots$, $c_m$ such that 
$$(1+c_1x^l+c_2x^{2l}+\cdots+c_mx^{ml})^2\equiv 1+x^l \pmod{p,x^{s}},$$
meaning that these two polynomials agree modulo $p$ up to degree $s$.
\end{lemma}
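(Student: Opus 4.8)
The plan is to read the asserted congruence as the existence of a square root of $1+x^l$ in the truncated ring $\mathbb{F}_p[x]/(x^s)$, sought among polynomials in the single quantity $x^l$. The only arithmetic fact about $p$ that I would use is that it is odd: the primes produced by Proposition~\ref{B-primes} satisfy $p\equiv 3$ or $5\pmod 8$, so in particular $p\neq 2$ and $2$ is a unit in $\mathbb{F}_p$. This invertibility of $2$ is the whole point of the argument, and it is the step I would flag as the crux; the rest is formal bookkeeping of degrees.

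First I would build the square root by a Newton-style successive approximation over $\mathbb{F}_p$. Set $g_0=1$, and suppose inductively that $c_1,\dots,c_j\in\mathbb{F}_p$ have been found with
$$g_j:=1+c_1x^l+\cdots+c_jx^{jl}, \qquad g_j^2\equiv 1+x^l \pmod{x^{(j+1)l}}.$$
Since $g_j$ and $1+x^l$ involve only powers of $x^l$, the error $g_j^2-(1+x^l)$ does too, and its lowest surviving term is $e_{j+1}x^{(j+1)l}$ for some $e_{j+1}\in\mathbb{F}_p$. Putting $g_{j+1}=g_j+c_{j+1}x^{(j+1)l}$ and expanding gives
$$g_{j+1}^2\equiv g_j^2+2c_{j+1}x^{(j+1)l}\pmod{x^{(j+2)l}},$$
because the cross term uses $g_j\equiv 1\pmod{x^l}$ while the square term $c_{j+1}^2x^{2(j+1)l}$ already vanishes modulo $x^{(j+2)l}$. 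Thus the coefficient of $x^{(j+1)l}$ becomes $e_{j+1}+2c_{j+1}$, and the choice $c_{j+1}=-e_{j+1}/2$ — legitimate exactly because $2$ is invertible in $\mathbb{F}_p$ — annihilates it.

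Iterating this step $m$ times yields $c_1,\dots,c_m$ with $g_m^2\equiv 1+x^l\pmod{x^{(m+1)l}}$. As $m$ is chosen so that $s<(m+1)l$, reducing modulo the smaller power gives $g_m^2\equiv 1+x^l\pmod{x^s}$, and lifting each $c_i$ to an arbitrary integer representative produces the congruence modulo $(p,x^s)$ in the statement. I expect no obstacle beyond tracking the degrees: the sole thing that could go wrong is the division by $2$, which is precisely what the condition on $p$ rules out.

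As an alternative to the inductive construction, one can take the binomial series $(1+x^l)^{1/2}=\sum_{k\ge 0}\binom{1/2}{k}x^{kl}$, which squares to $1+x^l$ identically over $\mathbb{Q}$. The key observation is that each coefficient $\binom{1/2}{k}$ has denominator a power of $2$ — for instance because $(-4)^k\binom{1/2}{k}$ is an integer, being (up to sign) twice a Catalan number — so these coefficients are $p$-integral for odd $p$. Reducing the first $m+1$ of them modulo $p$, truncating after the $x^{ml}$ term, and using that the identities among the $\binom{1/2}{k}$ persist modulo $p$, the same degree count shows the truncated square agrees with $1+x^l$ through degree $ml\le s$, as required.
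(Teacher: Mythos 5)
Your main argument is correct and is essentially the paper's proof: both determine the $c_j$ one at a time from the coefficient of $x^{jl}$, which has the form $2c_j+(\text{a polynomial in }c_1,\dots,c_{j-1})$, and solve using that $2$ is invertible modulo the odd prime $p$ (your $c_1=1/2$ is the paper's $c_1=\frac{p+1}{2}$). The binomial-series variant you sketch at the end is a valid alternative route but adds nothing beyond the inductive construction.
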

\begin{proof}
We prove this lemma constructively. We can start by taking $c_1=\frac{p+1}{2}$ (or any other number congruent modulo $p$ to it). Then, looking at coefficients with $x^{jl}$, where $j\in\{2,\ldots,m\}$ we have 
$$2c_j+S_j\equiv 0\pmod{p},$$
where $S_j$ is some polynomial in the previous coefficients $c_1$, $\ldots$, $c_{j-1}$. Since $p$ is odd, 2 is invertible and we can determine $c_j$ modulo $p$.
\end{proof}

If $c_j$ are the coefficients from the previous lemma, then 
$$(1+c_1x^l+c_2x^{2l}+\cdots+c_mx^{ml})^2=G_s(x)+L_s(x),$$
where $\deg(L_s)\leq s$ and $L_s(x)\equiv 1+x^l\pmod{p}$, and all monomials in $x$ in $G_s$ have degree at least $s+1$. Note $\deg(G_s)=2ml\leq 2s<2g+2$. Define the polynomial $Z_s$ as
$$Z_s(x)=t_s(x^{2g+2}-x^{\frac{p-1}{2}}-G_s(x))+G_s(x)+L_s(x).$$
In both cases ($l>s$ and $s\geq l$) for all $i=1,\ldots,s$, we have 
$$Z_s(pa_i)=G_s(pa_i)+L_s(pa_i)=(1+c_1p^la_i^l+c_2p^{2l}a_i^{2l}+\ldots+c_mp^{ml}a_i^{ml})^2=:b_i^2,$$
where, if $l>s$, we have $c_1=\cdots=c_m=0$, i.e. $b_1=\cdots=b_s=1$, $Z_s(0)=1$, and
$$Z_s(x)\equiv x^{2g+2}-x^{\frac{p-1}{2}}+x^l+1=Q(x)\pmod{p}.$$

We can change $Z_s$ by any polynomial $px(x-pa_1)\cdots(x-pa_s)R(x)$, where $R\in\mathbb{Z}[x]$ has degree at most $2g-s$, and the new curve will still have all known rational points and the same reduction modulo $p$. So, for any choice as above, define
$$C_s:y^2=H_s(x),\hspace*{2mm} H_s(x):=Z_s(x)+px(x-pa_1)\cdots(x-pa_s)R(x).$$
Note that we can take $b_i\equiv 1 \pmod{p}$, so $b_i\neq 0$, and we indeed have at least $4+2s$ rational points on $C_s$:
$$\{\infty_{\pm}, (0,\pm1), (pa_1,\pm b_1), \ldots, (pa_s, \pm b_s)\} \subset C_s(\mathbb{Q}).$$

Therefore, we constructed many examples of potentially sharp curves $C_{g-1}$ and curves $C_g$ of large rank. Let $r_s$ be the rank of the Mordell-Weil group of the Jacobian of $C_s$ over $\mathbb{Q}$.  We summarize this in the following theorem.

\begin{theorem}\label{Main examples}\hfill\\
(1) Let $s=g-1$, i.e., let $C_{g-1}$ be a curve defined as above. If $r_{g-1}<g$, then $r_{g-1}=g-1$, and
$$C_{g-1}(\mathbb{Q})=\{\infty_{\pm}, (0,\pm1), (pa_1,\pm b_1), \ldots, (pa_{g-1}, \pm b_{g-1})\}.$$
(2) Let $s=g$, i.e., let $C_g$ be a curve defined as above. Then $r_g\geq g$.
\end{theorem}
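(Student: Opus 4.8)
The plan is to read off both parts directly from Coleman's effective bound (Theorem \ref{C-Bound}) and its rank consequence (Corollary \ref{Rank-corollary}), feeding in the exact point count of the reduction that the construction already supplies. The one quantitative input I need is that $\#\overline{C}(\mathbb{F}_p)=4$, coming from $\overline{C}(\mathbb{F}_p)=\{(0,\pm1),\infty_{\pm}\}$. With genus $g$, Coleman's bound then reads
$$\#\overline{C}(\mathbb{F}_p)+2g-2=4+2g-2=2g+2,$$
so everything reduces to comparing this number against the explicit list of rational points produced for each $C_s$. Before applying Theorem \ref{C-Bound} I would first check its hypotheses: $C_s$ has good reduction at $p$ with reduction $\overline{C}:y^2=Q(x)$ by construction, and since $p$ was chosen in the interval $(2g+2,4g+4)$ we have $p>2g+2>2g$, so the only remaining hypothesis is the rank condition $r<g$.

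For part (1) I would assume $r_{g-1}<g$, so that Theorem \ref{C-Bound} applies to $C_{g-1}$ and yields $\#C_{g-1}(\mathbb{Q})\le 2g+2$. The construction, on the other hand, exhibits the $2g+2$ points $\infty_{\pm},(0,\pm1),(pa_1,\pm b_1),\ldots,(pa_{g-1},\pm b_{g-1})$. Hence the inequality must be an equality: $C_{g-1}$ is sharp and its rational points are exactly the listed ones. Corollary \ref{Rank-corollary} then forces $r_{g-1}=g-1$, which is the claimed value.

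For part (2) I would observe that $C_g$ carries the $2g+4$ points $\infty_{\pm},(0,\pm1),(pa_1,\pm b_1),\ldots,(pa_g,\pm b_g)$. If $r_g<g$ held, Theorem \ref{C-Bound} would give $\#C_g(\mathbb{Q})\le 2g+2<2g+4$, a contradiction; thus $r_g\ge g$, i.e. $C_g$ is excessive. This is precisely the ``excessive curves have rank at least $g$'' observation made in the introduction, realized concretely here.

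The step requiring the most care — really the only obstacle, since the argument is otherwise a direct invocation of cited results — is confirming that the listed points are genuinely \emph{distinct}, so that the counts are exactly $2g+2$ (resp. at least $2g+4$) rather than fewer. I would record this by noting that $\deg H_s=2g+2$ is even with leading coefficient $1$ (a nonzero square), giving two distinct points at infinity $\infty_{\pm}$; that $H_s(0)=Z_s(0)=1$ is a nonzero square, giving the two distinct points $(0,\pm1)$; and that the $a_i$ are distinct and nonzero with each $b_i\equiv 1\pmod p$, so the $x$-coordinates $0,pa_1,\ldots,pa_s$ are pairwise distinct and each $b_i\neq 0$. Together these guarantee the exact cardinalities the bound comparison relies on.
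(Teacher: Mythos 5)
Your proposal is correct and follows essentially the same route as the paper: the paper's proof simply says ``by construction $C_{g-1}$ is potentially sharp, hence sharp under the rank hypothesis,'' which unwinds to exactly your comparison of Coleman's bound $\#\overline{C}(\mathbb{F}_p)+2g-2=2g+2$ against the explicit list of points, with Corollary \ref{Rank-corollary} giving $r_{g-1}=g-1$ and the excess count $2g+4>2g+2$ giving $r_g\geq g$. Your additional verifications (the hypothesis $p>2g$, and the distinctness and nonvanishing of the listed points) are details the paper leaves implicit in the construction, and they check out.
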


\begin{proof}
(1) By construction, $C_{g-1}$ is potentially sharp. If the rank condition is satisfied, then $C_{g-1}$ is sharp, hence has exactly $2g+2$ points and rank $r_{g-1}=g-1$.\\
(2) The curve $C_g$ is constructed to be excessive, so $r_g\geq g$.
\end{proof}

\noindent{\bf Comment.} In the spirit of the ideas in the construction, we can change the curves in the following ways.\\
(1) The transformation $t_s$ can be slightly changed to ($k>e_1+\dots+e_s$)
$$t_{e_1,\dots,e_s}(x^k)=x^{k-e_1-\dots-e_s}(x-pa_1)^{e_1}(x-pa_2)^{e_2}\cdots(x-pa_s)^{e_s}.$$
(2) The numbers $a_1,\dots, a_s$ do not have to be integers; we can allow them to be rational numbers such that $a_i=\frac{b_i}{c_i}$, GCD$(b_i,c_i)=1$, $p\mid b_i$ for all $i=1,\dots,s$. But we need to ensure that the curve still has two points at infinity. Note that the resulting polynomial, after a linear change of variables, might not be monic anymore.\\
(3) We can change anything modulo $p$, so that the reduction of the curve stays the same, as long as we do not change the number of known rational points. For example, the constant term of the polynomial can be any square of an integer congruent to 1 modulo $p$, or we can change the polynomials constructed in Lemma \ref{Coeffs}, or increase their degree until it is not greater than $g$, and similar constructions.\\

We now want to construct some concrete examples of sharp curves from the previous family. To this end we need to check is whether the rank assumption is satisfied. In a sense, we want to find curves with many rational points and small rank of its Jacobian. \\

\noindent{\bf Remark.} There is a program started by Noam Elkies, and continued by Michael Stoll and Andreas K\"{u}hn dealing with a very similar question: Can we construct curves of genus 2 with many rational points whose differences are contained in the same arithmetic progression inside the Jacobian? Such curves have small rank despite having many rational points. In particular, we can apply the method of Chabauty and Coleman for these curves, at least with respect to the subgroup of the Jacobian, which contains the arithmetic progression. In many cases, this subgroup is a finite index subgroup of the Jacobian. Some examples can be found in \cite{Elkies} and \cite{Stoll-presentation}.

We now look at some of their examples and we determine whether they are sharp. In \cite{Elkies}, we can, e.g., find the family 
$$C(b): y^2 = (x^3 - x^2 + b x + (b+1))^2 - 4 b (b+1) x.$$
The curve $C(b)$ has rational points $\infty_{\pm}$, $(0,\pm(b+1))$, $(1,\pm1)$, and $(-1,\pm(2b+1))$. We denote $q:=[\infty_+ -\infty_-]$. Then $[(0,-(b+1))-(0,b+1)]=5q$, $[(1,1)-(1,-1)]=13q$, $[(-1,2b+1)-(-1,-(2b+1)]=29q$. One special curve, for $b=-\frac{5}{2}$, has six pairs of points in the arithmetic progression. The curve $C(-\frac{5}{2})$ has to more pairs of rational points, $(3,\pm6)$, and $(\frac{1}{2},\pm\frac{7}{8})$, and $[(3,6)-(3,-6)]=61q$, $[(\frac{1}{2},\frac{7}{8})-(\frac{1}{2},-\frac{7}{8})]=83q$. 

The rank of $J(C(-\frac{5}{2}))(\mathbb{Q})$ is equal to 1, and it is almost sharp, since it can be proven using the method of Chabauty and Coleman combined with the Mordell-Weil sieve that $\#C(\mathbb{Q})=12$, whereas $\#\overline{C}(\mathbb{F}_7)=11$. 

There is another curve, found by Stoll, $C:y^2 = (x+2) (x^2-3x+6) (4x^3+8x^2-3x+3)$, that is sharp and is interesting because it has 13 rational points and still rank 1. In this case, the differences of non-Weierstrass points and their images under the hyperelliptic involution do not form an arithmetic progression, but an arithmetic progression modulo the torsion subgroup of the Jacobian. 

In the end, we mention the curve from \cite{Stoll-presentation}, $C:y^2=25x^6 + 20x^5 - 76x^4 - 134x^3 + 124x^2 + 96x + 9$. This curve is interesting because its Jacobian contains a rational point of canonical height $\approx 0.000298$, which is the smallest known positive canonical height on a Jacobian surface over $\mathbb{Q}$. It turns out that this curve is sharp.\\

Based on the construction of curves $C_{g-1}$ from Theorem \ref{Main examples}, we searched for examples of sharp curves using Magma. We were unable to find sharp examples of genus two or three. For these genera, the generic rank of the subgroup generated by differences of the rational points on the curve appears to be at least $g$. We give one example of a sharp curve of genus four and one example of a sharp curve of genus five. For both examples, we assume the Generalized Riemann Hypothesis to compute the rank. We did not check the rank conditions of curves of genus at least six, since this is an extremely computationally difficult task.\\

We start with a curve of genus four, and we choose the prime $p=11$. Then the reduction modulo 11 is the curve $\overline{C}:y^2=x^{10}+1$. Since there are no small powers of $x$, we only need to add more rational points using some convenient $t$ transformation. We consider the curve
$$C:y^2=x^4(x-11)^2(x-22)^2(x-33)^2+1.$$
This curve has the property that the element $[\infty_--\infty_+]$ of the Jacobian has finite order 5, increasing the chances to obtain smaller rank. Indeed, it is verified in Magma that this curve has rank 3, so it is a sharp curve of genus four, and
$$C(\mathbb{Q})=\{\infty_{\pm}, (0,\pm1), (11,\pm 1), (22,\pm1), (33, \pm1)\}.$$

We use a similar strategy to construct a sharp curve of genus five. We choose $p=13$ and start with the reduction curve $\overline{C}:y^2=x^{12}+1$. Then we consider the curve
$$C':y^2=x^4\left(x^2-\dfrac{13^2}{3^2}\right)^2\left(x^2-\dfrac{13^2}{4^2}\right)^2+1,$$
or, after the linear change of variables
$$C:y^2=x^4(9x^2-169)^2(16x^2-169)^2+144^2.$$
This curve is potentially sharp, as $\#\overline{C}(\mathbb{F}_{13})=4$, and $\#C(\mathbb{Q})\geq 12$. Hence, the rank of its Jacobian satisfies $r\geq 4$. \\
The group of automorphisms of $C$ contains the group $(\mathbb{Z}/2\mathbb{Z})^2$, so by \cite[3.1.1., Theorem 5]{DecomposingJacobians}, $C$ has two quotients of degree two, $C_1$, and $C_2$, such that 
$J(C)$ is isogenous to the product $J(C_1)\times J(C_2)$. We conclude that the rank of $J(C)$ is equal to the sum of ranks of $J(C_1)$ and $J(C_2)$. Using Magma, we compute that $C_1$ and $C_2$ are given by the equations\\
$\scriptstyle C_1:y^2 + (x^2 + x)y = 5184x^6 + 37944x^5 - 802992x^4 - 3580910x^3 + 35285157x^2 + 32655315x + 7268209,$\\
$\scriptstyle C_2:y^2 = -20736x^7 - 55584x^6 + 4461023x^5 + 5168390x^4 - 294280801x^3 - 78257128x^2 + 4786378304x + 8098228736.$\\
Furthermore, for both curves $C_1$ and $C_2$, Magma gives the lower rank bound 0, and the upper bound 2. We conclude that both $J(C_1)$ and $J(C_2)$ have rank 2, because the sum of their ranks is at least 4. So the Jacobian of C has rank 4. Hence the curve is sharp and we have
$$C(\mathbb{Q})=\left\{\infty_{\pm},(0,\pm144),\left(\pm\dfrac{13}{3},\pm12\right), \left(\pm\dfrac{13}{4},\pm12\right)\right\}.$$

We note that it might be possible to use the method of Chabuaty and Coleman to find the rational points on $C_2$, and to use that information to find $C(\mathbb{Q})$. However, the curve $C_2$ is not sharp, so it would be more involved to compute the set $C_2(\mathbb{Q})$.

\bibliographystyle{alpha}

\bibliography{References}

\end{document}